\numberwithin{equation}{section}
\setlist{nosep}
\setlist{noitemsep}
\newcommand{\Q}{\mathbb{Q}}
\newcommand{\R}{\mathbb{R}}
\newcommand{\I}{\mathcal{I}}
\newtheorem{theorem}{Theorem}
\newtheorem{proposition}{Proposition}
\newtheorem{lemma}{Lemma}
\theoremstyle{plain}
\theoremstyle{definition}
\newcommand{\tref}[1]{Theorem~\ref{t.#1}}
\newcommand{\pref}[1]{Proposition~\ref{p.#1}}
\newcommand{\lref}[1]{Lemma~\ref{l.#1}}
\newcommand{\cref}[1]{Corollary~\ref{c.#1}}
\newcommand{\eref}[1]{(\ref{e.#1})}
\def \1{\mathbf{1}} 
\def \mcl{\mathcal}
\def \mbb{\mathbb}
\def \ep{\varepsilon}
\def \dist{\mathrm{dist}}
\newcommand{\g}{\mathsf{g}}
\def\nab{\nabla}
\def\({\left(}
\def\){\right)}
\def\XXint#1#2#3{{\setbox0=\hbox{$#1{#2#3}{\int}$}
		\vcenter{\hbox{$#2#3$}}\kern-.5\wd0}}
\newcommand{\Iso}{\mathrm{Iso}}
\newcommand{\Mim}{\mathrm{Mim}}
\def\nab{\nabla}
\def\pa{\partial}
\renewcommand{\P}{\mathbb{P}}
\def\namedlabel#1#2{\begingroup
	#2%
	\def\@currentlabel{#2}%
	\phantomsection\label{#1}\endgroup
}
\begin{document}
	\title[Non-rigidity of the Coulomb gas]{Non-rigidity Properties of the Coulomb gas}
	\author{Eric Thoma}
	\date{March 6, 2023}
	\subjclass[2020]{60G55, 82B05, 60D05}
	
	\begin{abstract}
		We prove existence of infinite volume $d$-dimensional Coulomb gases which are not number rigid for $d \geq 3$. This makes the Coulomb gas the Gibbs point process with the longest range pairwise interaction (i.e.\ with the smallest $s$ in the interaction kernel $\g(x) = |x|^{-s}$) for which number non-rigidity has been proved in $d \geq 3$. We rule out properties stronger than number rigidity for the two-dimensional Coulomb gas.
	\end{abstract}
	
	\maketitle
	
	\section{Introduction} \label{s.intro}
	The subjects of this article are point processes $X$ on $\R^d$, $d \geq 2$, with law $\P$ arising as weak limits of microscopic point processes associated to $d$-dimensional Coulomb gases. More precisely, we consider Gibbs measures $\P_N$ on point configurations $X_N = (x_1,\ldots,x_N) \in (\R^d)^N$ defined by
	\begin{equation} \label{e.PNdef}
		\P_N(dX_N) \propto e^{-\beta \mcl H(X_N)} dX_N, \quad \mcl H(X_N) := \frac12 \sum_{\substack{i,j = 1 \\ i \ne j}}^N \g(x_i - x_j) + \sum_{i=1}^N W(x_i)
	\end{equation}
	for the Coulomb kernel $\g$ given by $x \mapsto -\log|x|$ in $d=2$ and $x \mapsto |x|^{-d+2}$ in $d\geq 3$, a confining potential $W : \R^d \to \R$, possibly $N$ dependent, and an inverse temperature $\beta > 0$. We can associate to $X_N$ the Radon measure $X = \sum_{i=1}^N \delta_{x_i}$, and we endow Radon measures with the topology generated by the dual pairing with continuous, compactly support functions $\R^d \to \R$. Weak subsequential limits of the law of $X$ (or versions re-centered around a sequence of points $z_N$) as $N \to \infty$ are known to exist in some generality \cite{AS21,T23}. Such limits are {\it thermodynamic limits}, and we call any limit point $\P$ an {\it infinite volume Coulomb gas}. We will assume throughout that $\P$ is an infinite volume Coulomb gas with a few conditions to be stated later. All point processes will be simple and locally finite.

	A point process $X$ is said to be {\it number rigid} if for any bounded Borel set $\Omega$, there is a Borel measurable function $F$ with $X(\Omega) = F(X_{|\Omega^c})$ almost surely. Here $X_{|\Omega^c}$ denotes the restriction of $X$ to the complement of $\Omega$. In other words, the number of points within $\Omega$ is determined a.s.\ by the point configuration outside of $\Omega$. We now state our first main result.
	\begin{theorem} \label{t.main}
		For any $d \geq 3$, there exists an infinite volume Coulomb gas which is not number rigid.
	\end{theorem}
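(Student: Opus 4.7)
The plan is to construct an infinite volume Coulomb gas $\P$ that is a nontrivial mixture of two infinite volume Coulomb gases agreeing on the complement of a bounded ball $B_R \subset \R^d$ but differing by exactly one point inside $B_R$. Rigidity will then be broken because $X(B_R)$ takes two distinct values with positive probability conditional on $X_{|B_R^c}$.

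First, I would set up two competing finite-$N$ systems: for a standard confining potential $W_N$, compare the Gibbs measure $\P_N^-$ on $N$ particles with a Gibbs measure $\P_N^+$ on $N+1$ particles under a matched confining potential. By conditioning each on the number of points in $B_R$, one can arrange that typical configurations of $\P_N^\pm$ in $B_R$ differ by exactly one point. The central step is then to show that the restrictions $\P_N^+|_{B_R^c}$ and $\P_N^-|_{B_R^c}$ become close in total variation as $N \to \infty$ and then $R \to \infty$. The heuristic is that an extra unit point charge in $B_R$ creates an electrostatic potential of order $|x|^{-(d-2)}$ outside, which vanishes at infinity in $d \geq 3$, and that this potential is additionally screened by a local charge cloud generated by the Gibbs measure; the net effect on the outside configuration should be negligible. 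Quantitatively, I would rely on the concentration, screening, and microscopic fluctuation estimates from \cite{T23} to make this precise.

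With the key estimate in hand, passing to subsequential weak limits yields infinite volume Coulomb gases $\P^+$ and $\P^-$ with $\P^+|_{B_R^c} = \P^-|_{B_R^c}$ in distribution, while $\E_{\P^+}[X(B_R)] = \E_{\P^-}[X(B_R)] + 1$. To realize the mixture $\P = \tfrac12(\P^+ + \P^-)$ as an infinite volume Coulomb gas in the precise sense of the paper, I would either show such a mixture itself arises as a weak subsequential limit of Gibbs measures by interpolating between the two families $\{\P_N^\pm\}$ (which is natural given the freedom to choose $N$-dependent $W_N$ and recentering $z_N$), or more directly design $W_N$ so that a single-sequence Gibbs measure has a bimodal distribution of $X(B_R)$ corresponding to the two regimes above. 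Under $\P$, the conditional law of $X(B_R)$ given $X_{|B_R^c}$ is then nontrivial, which is the desired non-rigidity.

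The principal obstacle is the screening cancellation step: turning the heuristic that a localized charge perturbation decays and is absorbed by thermal fluctuations into an honest total-variation bound on the outside restriction. One must show that thermal noise in the outside electric field dominates the $|x|^{-(d-2)}$ signal of the added charge on all spatial scales simultaneously, and that this domination survives the passage to the infinite volume limit. This is exactly where the hypothesis $d \geq 3$ is essential, since in $d=2$ the Coulomb potential $-\log|x|$ grows at infinity and such cancellation cannot occur; this fits naturally with the subtler two-dimensional behavior that the paper addresses separately.
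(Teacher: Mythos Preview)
Your strategy is quite different from the paper's, and it has a genuine gap at the step you yourself flag as the ``principal obstacle.'' Showing that $\P_N^+|_{B_R^c}$ and $\P_N^-|_{B_R^c}$ are close in total variation is not a consequence of the energy/potential decay $|x|^{-(d-2)}$: that decay controls the \emph{change in the Hamiltonian} induced by an extra charge, but converting a small energy perturbation into a small total-variation (or even non-mutual-singularity) statement for the Gibbs measures requires something like a relative-entropy or Girsanov-type bound that is uniform in $N$, and the screening/fluctuation estimates in \cite{T23} do not furnish this. In fact, establishing that the exterior configuration cannot detect one extra interior particle is essentially a reformulation of non-rigidity itself, so your reduction is close to circular unless you supply an independent mechanism. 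There is a second gap as well: under the paper's definition, an infinite volume Coulomb gas is a weak subsequential limit of a \emph{single} sequence $\P_N$, and a convex combination $\tfrac12(\P^+ + \P^-)$ of two such limits need not arise this way; your suggestions (interpolating $W_N$, or engineering bimodality) are not substantiated and would themselves require nontrivial work.

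By contrast, the paper never compares two different gases. It takes a single limit $\P$ satisfying mild density conditions and proves directly, via \pref{main}, that $\P(\{X(B_R)=n\}\mid X_{|B_R^c})$ cannot concentrate on one value. The mechanism is a transport (``mimicry'') argument carried out in finite $N$: one moves a particle between $B_R$ and the exterior of a large ball $B_S$, and the key symmetrization trick is that at least one of the two transport directions has bounded energy cost by \lref{Mim}. Entropy factors from particle labeling, together with rough density bounds, then yield the inequality \eref{main} after passing $N\to\infty$, $T\to\infty$, $S\to\infty$. This sidesteps entirely the total-variation comparison you need.
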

	
	While \tref{main} only claims the existence of one gas without number rigidity, any bounded density, non-vacuum $\P$ which is a limit of finite gases (or satisfies canonical DLR conditions in the sense of \cite{DHLM21}) is number non-rigid by \pref{main} (or a straightforward modification of \pref{main}).
	
	
	The (number) rigidity property, first introduced in by Ghosh and Peres in \cite{GP17}, has received significant attention for a variety of point processes in recent years. It is related to a previous notion of a hereditary process in stochastic geometry (see \cite{D17} and references therein). There have thus far been two major approaches toward proving rigidity. The method of \cite{GP17} involves estimates for fluctuations of certain linear statistics and was first used to prove rigidity for the Ginibre ensemble (which is a certain 2D infinite volume Coulomb gas) and the zeros of a certain Gaussian analytic function. In \cite{RN18}, a similar method was used to prove rigidity of the $\mathrm{Sine}_\beta$ process, which roughly corresponds to $d=1$ and $\g(x) = -\log|x|$ in \eref{PNdef} (in infinite volume and with proper scaling on $W$) and which is especially relevant to the present article because it has a logarithmic pair interaction, just like the two-dimensional Coulomb gas.
	
	A second approach based on arguments from statistical mechanics and the DLR formalism was introduced in \cite{DHLM21}, where it was put to use for a proof of rigidity for the $\mathrm{Sine}_\beta$ process, independently of \cite{RN18}. Later, number non-rigidity for circular Riesz gases with parameter $s \in (d-1,d)$ and $d \geq 1$ was shown in \cite{DV21} using the DLR approach (and the transport of particles to ``$\infty$"). The Riesz gas corresponds to \eref{PNdef} with $\g(x) = |x|^{-s}$, which is notably longe-range and non-integrable for $s < d$, and so \cite{DV21} eliminated a natural conjecture for a possible threshhold in $s$ between rigidity and non-rigidity (and proposed a new conjectured threshhold of $s = d-1$, a conjecture which we disprove).
		
	Regarding the Coulomb gas, the Ginibre ensemble is an infinite volume Coulomb gas in $d=2$ with $\beta=2$ and certain quadratic $W$, so the 2D Coulomb gas is rigid for at least this temperature and potential. Furthermore, if the canonical DLR equations are established for the two-dimensional gas, it will establish number rigidity (see \cite[Theorem 3.18]{DHLM21}). The present article proves non-rigidity for the gas in $d \geq 3$, which also represents the longest range Riesz gas yet proved non-rigid. Notably, (non-)rigidity of the Riesz gas with $s \in (d-2,d-1]$ remains open in all dimensions. We expect the methods of this article will be useful in studying these cases (in particular, integrability at $\infty$ of $\nab^2 \g$ should allow our general method to work, at least at a non-rigorous heuristic level).
	
	Rigidity for other classes of point processes has been considered in several works. Perturbed lattices were considered in \cite{PS14} and various determinantal point processes in \cite{G15,B16,BQ17}.
	
	Our proof technique is based on transporting particles within the gas while controlling entropy and energy costs associated to the movement. Two novelties specific to the present article are the {\it symmetric} movement of particles over {\it arbitrarily large distances} and a fundamental need to work in {\it finite volume} and {\it finite particle count}. Indeed, we crucially use an approximation in which we condition on $X_{|B_S \setminus \Omega}$ for a large ball $B_S$ of radius $S$ rather than on $X_{|\Omega^c}$, and we transport particles between $\Omega$ and $B_S^c$. We achieve estimates for various transport costs that are independent of $S$ by a symmetrization trick: if it is unfavorable to transport from $\Omega$ to $B_S^c$, then it must be favorable to transport from $B_S^c$ to $\Omega$. We also use the finite particle number approximations $\P_N$ to $\P$ since entropy factors associated to the indistinguishability and labeling of particles will play an important role. This feature is somewhat surprising given that number rigidity is a genuinely {\it infinite-volume} phenomenon (for example, all $N$-particle processes are trivially number rigid). We use neither any DLR formalism nor any estimates for gas fluctuations beyond some rough bounds on particle density. Our ideas have strong ties to the isotropic averaging techniques developed in \cite{T23}, especially the technique of ``mimicry" used there to prove tightness of the properly rescaled minimal particle gap in an $N$-particle Coulomb gas.
	
	We will deduce \tref{main} from the following proposition. We denote by $B_r(z)$ the open ball of radius $r$ centered at $z$. Throughout the paper, we fix a distinguished center point $z_0$ and write $B_r$ for $B_r(z_0)$. Whenever we condition on a (Borel measurable) random variable, we mean the regular distribution conditioned on the $\sigma$-algebra generated by the random variable.
	\begin{proposition} \label{p.main}
		Let $\delta > 0$ and let $X$ be a point process on $\R^d$, $d \geq 3$, with law $\P$ having following properties:
		\begin{enumerate}
			\item (Non-vacuum) We have
			\begin{equation} \label{e.nonzerodensity} \limsup_{T \to \infty} \P(\{X(B_{2T} \setminus B_{T}) \geq \delta T^d \}) = 1. \end{equation} 
			\item (Limit of Coulomb gases) We have $\P = \lim_{N \to \infty} \P_N$ weakly along a subsequence of $N$-particle Coulomb gases $\P_N$ with $\Delta W \leq \delta^{-1}$ for confining potentials $W = W_N$ and inverse temperatures $\delta \leq \beta = \beta_N \leq \delta^{-1}$.
		\end{enumerate}
		Then for any $\ep > 0$, there exists an $R > 0$ and an $\sigma(X_{|B_R^c})$-measurable event $G_R$ with $\P(G_R) > 1- \ep$ and
		\begin{align} \label{e.main}
			\lefteqn{ \P\(\{X(B_R) = n\} \ \big | \ X_{|B_R^c}\) } \quad & \\ \notag &\leq \ep + C \P\(\{X(B_R) = n-1\} \ \big | \ X_{|B_R^c}\) + C \P\(\{X(B_R) = n+1\} \ \big | \ X_{|B_R^c}\) \quad \forall n \geq 0
		\end{align}
		$\P$-almost surely on $G_R$, where $C = C(\ep, \delta)$.
	\end{proposition}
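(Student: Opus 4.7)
The overall plan is to establish the inequality first for the finite-$N$ Coulomb gases $\P_N$ conditioned on a thickened complement, then average and pass to the weak limit $\P_N \Rightarrow \P$. The complication at finite $N$ is that conditioning on the full exterior $X_{|B_R^c}$ determines $X(B_R) = N - X(B_R^c)$ deterministically, so I instead fix an intermediate scale $S \gg R$ and condition only on the annular shell $X_{|A_{R,S}}$ with $A_{R,S} := B_S \setminus B_R$. Writing $M = N - X(A_{R,S})$ for the number of free particles and
\[
\phi_n \;:=\; \frac{1}{n!\,(M-n)!}\int_{B_R^n \times (B_S^c)^{M-n}} e^{-\beta \mcl H_{\mathrm{eff}}(y,z)} \, dy\, dz,
\]
where $\mcl H_{\mathrm{eff}}$ includes $W$ and the interactions with the frozen shell, the conditional law of $X(B_R)$ given $X_{|A_{R,S}}$ equals $\phi_n / \sum_k \phi_k$. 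The target is to prove $\phi_n \leq \ep\sum_k\phi_k + C(\phi_{n-1}+\phi_{n+1})$ and then combine with martingale convergence (as $S \to \infty$) and weak convergence (as $N \to \infty$) to obtain \eref{main}; the event $G_R$ will be the event that the non-vacuum condition \eref{nonzerodensity} holds at the radius scale used in the construction.

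The key ratio is
\[
\frac{\phi_n}{\phi_{n-1}} \;=\; \frac{M-n+1}{n} \cdot \frac{\int_{B_R^n \times (B_S^c)^{M-n}} e^{-\beta \mcl H_{\mathrm{eff}}}}{\int_{B_R^{n-1} \times (B_S^c)^{M-n+1}} e^{-\beta \mcl H_{\mathrm{eff}}}},
\]
which I would control via an explicit transport. Pulling out one coordinate on each side, the integral ratio is essentially a ratio of $\int_{B_R} e^{-\beta \mcl H(\xi, y)}\,dy$ against $\int_{B_S^c} e^{-\beta \mcl H(\xi, w)}\,dw$ averaged over the common coordinates $\xi$. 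To make this tractable, I would restrict the $w$-integral to a thin spherical shell $A \subset B_S^c$ at radius $\sim S$ on which hypothesis \eref{nonzerodensity} guarantees a positive density of particles, and then average candidate destinations over small balls $B_\rho(w_0) \subset A$. The change $\mcl H(\xi, y) - \mcl H(\xi, w)$ splits into a two-body part $\sum_i [\g(\xi_i - y) - \g(\xi_i - w)]$ (summed over all other particles, free or frozen) plus $W(y) - W(w)$; since $\g$ is harmonic on $\R^d \setminus \{0\}$ for $d \geq 3$, averaging $w$ over $B_\rho(w_0)$ collapses the two-body contribution from distant $\xi_i$ to its value at $w_0$, which is $O(S^{-(d-2)}) \to 0$, while the $W$-contribution is handled by the bound $\Delta W \leq \delta^{-1}$. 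The symmetrization enters as follows: writing the analogous identity for $\phi_n/\phi_{n+1}$ with the roles of $B_R$ and $B_S^c$ swapped, the two transport energy changes differ by a sign up to vanishing boundary terms, so from $\cosh(x) \geq 1$ at least one of $\phi_{n-1}/\phi_n$ and $\phi_{n+1}/\phi_n$ must be bounded below by a universal constant depending only on $\ep$ and $\delta$, yielding $\phi_n \leq C(\phi_{n-1} + \phi_{n+1})$ on the good events.

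The main obstacle will be establishing the energy bound uniformly in $S$. The naive estimate $\sum_j \g(y - z_j)$ for the interaction of a $B_R$ particle with the configuration in $B_S^c$ is of order $(\text{density}) \cdot S^2$, which diverges as $S \to \infty$, and only the interplay of harmonicity, isotropic averaging over the landing ball, and cancellation against the confining potential saves things. A secondary issue is the rare event that a transported particle lands too close to an existing one, incurring a divergent energy cost; this I would handle by excluding a small neighborhood of each particle from the integration, using rough a priori density bounds to show the exclusion is negligible and absorbing its complement into the $\ep$ term. The finite-$N$ setting is essential here because the combinatorial factors $M-n+1$ and $n$ encoding indistinguishability of particles interact nontrivially with the transport Jacobians, so the argument does not pass directly through a DLR formulation.
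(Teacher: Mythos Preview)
Your overall scaffolding---condition on $X_{|B_S\setminus B_R}$ at finite $N$, compare $\phi_n$ to $\phi_{n\pm1}$ by transporting a single particle, then take $N\to\infty$ and $S\to\infty$---matches the paper. But the heart of the argument, the energy estimate for the transport, does not work as you have written it, and a key structural ingredient is missing.

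\textbf{The energy estimate is not correct.} You propose moving a particle from $y\in B_R$ to a ball $B_\rho(w_0)$ at radius $\sim S$, and you claim that averaging $w$ over this ball ``collapses the two-body contribution from distant $\xi_i$ to its value at $w_0$, which is $O(S^{-(d-2)})\to 0$.'' The mean-value property does give $\fint_{B_\rho(w_0)}\g(\xi_i-w)\,dw=\g(\xi_i-w_0)$ for $\xi_i\notin B_\rho(w_0)$, but this does nothing to control the \emph{difference} $\sum_i[\g(\xi_i-y)-\g(\xi_i-w_0)]$: each summand is $O(S^{-(d-2)})$ only for $\xi_i$ at distance $\gtrsim S$, and there are order $N$ particles in the sum, so no smallness follows. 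Likewise, $W(y)-W(w_0)$ is not controlled by $\Delta W\le\delta^{-1}$ when $|y-w_0|\sim S$ (for $W(x)=|x|^2$ it is of order $S^2$). Your $\cosh$ symmetrization would repair this only if the energy change were a single number; here it depends on the entire configuration $\xi$, so you cannot factor $e^{\pm\beta\Delta E}$ out of the integrals defining $\phi_{n\pm1}$.

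\textbf{What the paper does instead.} The transport is not to a fixed ball but to a unit neighborhood of an \emph{existing} particle $y_j$ (the ``mimicry'' operator $\Mim_{i\to j}$). Superharmonicity then gives $\Mim_{i\to j}\g(y_i-y_k)\le\g(y_j-y_k)$ for every $k$, so the post-transport interaction energy of the moved particle is bounded by the interaction energy already felt by $y_j$, and $\Mim_{i\to j}W(y_i)\le W(y_j)+C$ holds \emph{locally} from $\Delta W\le\delta^{-1}$. The symmetrization is then pointwise in configurations: whichever of $y_i,y_j$ sits at lower total potential determines the favorable direction, and one splits the integral accordingly. A second missing ingredient is a third scale $T\gg S$: the target particle $y_j$ is chosen in $B_{2T}\setminus B_T$, so that the $T^d$ entropy loss from localizing the moved particle is exactly compensated by the $\sim T^d$ choices of $j$, and the residual error terms are then eliminated by sending $T\to\infty$ \emph{before} $S\to\infty$. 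Without both the mimicry and the extra scale, the combinatorial factor $(M-n+1)/n$ in your ratio cannot be absorbed.
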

	\begin{proof}[Proof of \tref{main}]
		Clearly any measure $\P$ in the above proposition is not number rigid. Thus it only remains to verify the existence of such a measure to prove \tref{main}, but this is a clear consequence of \cite[Theorem 1]{AS21}. To be precise, \cite{AS21} (and also \cite{T23}) prove that the finite dimensional distribution $(X(A_1),\ldots,X(A_k))$ is tight for bounded Borel sets $A_1,\ldots,A_k$ under some conditions for sequences of Coulomb gases $\P_N$. For point processes, convergence of the finite dimensional distributions and weak convergence are equivalent (see \cite[Theorem 11.1.VII]{DVJ08}). The non-vacuum condition \eref{nonzerodensity} is satisfied for limit points $\P$ under certain conditions by the quantitative lower bounds for density on large microscopic scales of \cite{AS21} or \cite[Theorem 6]{T23}.
	\end{proof}
	
	Some point processes exhibit rigidity properties even stronger than number rigidity. For example, the translation-invariant zero process of a Gaussian analytic function has both the number and center of mass of the points within a bounded Borel set $\Omega$ determined a.s.\ by the configuration outside $\Omega$. In other words, it is both number and center-of-mass rigid. An infinite family of different rigidity properties with associated processes are found in \cite{GK21}.
	
	Ghosh and Peres also introduced the concept of {\it tolerance} (under rigidities). Roughly speaking, a point process is tolerant if, conditional on the external configuration, it can assume any configuration on the manifold determined by a set of rigidity invariants. For example, a number-rigid process $X$ is tolerant if the law of $X$ restricted to a ball $B$ conditional on $X_{|B^c}$ induces a probability measure on configurations in $B^{X(B)}$ which is a.s.\ mutually absolutely continuous with Lebesgue measure. \cite{GP17} prove this property for the Ginibre ensemble. The proof involves estimates on inverse power sums over the point process with delicate cancellations. \cite{DHLM21} accomplishes a similar task using the DLR formalism for the $\mathrm{Sine}_\beta$ process (establishing DLR equations also involves special cancellations within the point process). Tolerance is explored in depth in \cite{GK21}.
	
	We remark that the $k$-point correlation function estimates of \cite{T23} apply to infinite volume Coulomb gases in any $d \geq 2$ (through convergence of the finite dimensional distributions of a finite gas). This implies, for example, that
	\begin{equation}
		\P\(\bigcap_{i=1}^n \{X(A_i) \geq 1\}\)  \leq C_{n,\delta} \prod_{i=1}^n \mathrm{vol}(A_i)
	\end{equation}
	for disjoint balls $A_i \subset \R^d$ and $\P$ any limit point of $\P_N$ fitting point (2) of \pref{main} in any $d \geq 2$. Unfortunately, this seems to be not enough to conclude a similar result for conditioned versions of $\P$. This can be fixed by applying the {\it isotropic averaging} argument of \cite{T23} directly to finite volume conditioned gases. Indeed, the argument is completely ``localized" in the gas, meaning it applies uniformly under conditioning that is sufficiently spatially separated from the events being bounded.
	\begin{theorem} \label{t.infvolrhon}
		Let $\delta > 0$ and $d \geq 2$. Consider an infinite volume $d$-dimensional Coulomb gas $\P$ which is a weak limit of finite volume gases $\P_N$ along a subsequence. Furthermore assume $\Delta W \leq \delta^{-1}$ and $\delta \leq \beta  \leq \delta^{-1}$.for the potentials $W = W_N$  and inverse temperatures $\beta = \beta_N$ of the gases $\P_N$. Then for any $R > 0$, $n \geq 1$, and disjoint balls $A_1,\ldots,A_n \subset B_R$, we have
		\begin{align} \label{e.infvolrhon}
			&{ \P\(\{X(A_1) \geq 1\} \cap \ldots \cap \{X(A_n) \geq 1 \} \ \big | \ X_{|B_R^c}, \{ X(B_R) = n \}\)} \\  \notag &\quad \leq \frac{C_{\delta,n,R} }{\P(\{X(B_R) = n\} \ | \ X_{|B_R^c})} \ep^{-nd} \prod_{i=1}^n \mathrm{vol}(A_i)
		\end{align}
		with probability $1$ under $\P$, where 
		$$
		\ep = \min_{i} \dist(A_i, \pa B_R).
		$$
	\end{theorem}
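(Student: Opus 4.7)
The plan is to prove the inequality first for the finite-volume gases $\P_N$ uniformly in $N$ via isotropic averaging, and then transfer it to $\P$ by weak convergence. In finite volume the total particle number is fixed, so conditioning on $X_{|B_R^c}$ automatically determines $X(B_R) = n$, and the conditional density of the $n$ interior particles on $B_R^n$ is proportional to $e^{-\beta H}$, where
\[ H(x_1,\ldots,x_n) = \sum_{i<j} \g(x_i-x_j) + \sum_i V_{\mathrm{out}}(x_i), \qquad V_{\mathrm{out}}(x) = \sum_{y_k \in X_{|B_R^c}} \g(x-y_k) + W(x). \]
Since the $A_i$ are disjoint subsets of $B_R$, on $\{X(B_R)=n\}$ the event $\bigcap_i \{X(A_i) \geq 1\}$ forces exactly one particle in each $A_i$, and its conditional probability equals $n! \int_{A_1 \times \cdots \times A_n} e^{-\beta H}\, dx \,\big/\, \int_{B_R^n} e^{-\beta H}\, dx$.

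The key observation driving isotropic averaging is that for each $i$ and arbitrary fixed values of the other coordinates, the map $x_i \mapsto H(x_1,\ldots,x_n)$ has distributional Laplacian at most $\delta^{-1}$: the $\g$-contributions are superharmonic on $\R^d$ (the Coulomb kernel satisfies $-\Delta \g = c_d \delta_0 \geq 0$), while $\Delta W \leq \delta^{-1}$ by hypothesis. Hence $x_i \mapsto H - \frac{\delta^{-1}}{2d}|x_i|^2$ is distributionally superharmonic, so the mean value inequality combined with Jensen's inequality gives, for any $B_r(c) \subset B_R$,
\[ e^{-\beta H(c,\, x_2, \ldots, x_n)} \leq e^{\beta \delta^{-1} r^2/(2d)} \cdot \frac{1}{|B_r|} \int_{B_r(c)} e^{-\beta H(y,\, x_2, \ldots, x_n)}\, dy. \]
Integrating $c$ over $A_i$ with $r = \ep$ (so that $A_i + B_\ep \subset B_R$) and using Fubini yields a one-coordinate bound $\int_{A_i} e^{-\beta H}\, dc \leq e^{C\ep^2} \tfrac{|A_i|}{|B_\ep|} \int_{B_R} e^{-\beta H}\, dy$. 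Iterating the bound across each $i = 1, \ldots, n$ produces
\[ \int_{A_1 \times \cdots \times A_n} e^{-\beta H}\, dx \leq e^{nC \ep^2} \prod_i \frac{|A_i|}{|B_\ep|} \int_{B_R^n} e^{-\beta H}\, dy \]
for $C = C(\delta, d)$. Using $\beta \leq \delta^{-1}$ and $\ep \leq 2R$, this establishes the finite-volume version of \eref{infvolrhon} with a constant $C_{\delta,n,R}$ absorbing $n!$, $e^{nC\ep^2}$, and $|B_1|^{-n}$; crucially it does not depend on $X_{|B_R^c}$ or on $N$.

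To transfer the bound to $\P$, I rewrite the finite-volume inequality as the equivalent unconditional statement that, for every $F \in \sigma(X_{|B_R^c})$,
\[ \P_N\bigl(\bigcap_i \{X(A_i) \geq 1\} \cap \{X(B_R) = n\} \cap F\bigr) \leq C_{\delta,n,R}\, \ep^{-nd} \prod_i |A_i| \cdot \P_N(F). \]
A routine approximation — perturbing the radii of $B_R$ and the $A_i$ to avoid radii where $\P$ places mass on the boundary, and approximating the count-event indicators by continuous functions of the point measure — together with the weak convergence $\P_N \to \P$ transfers the inequality to $\P$; dividing by $\P(X(B_R) = n \mid X_{|B_R^c})$ on the set where this is positive then yields \eref{infvolrhon}. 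The main obstacle will be this last step, because the events $\{X(A_i) \geq 1\}$ and $\{X(B_R) = n\}$ are not continuous functionals on the configuration space in the vague topology and conditional probabilities do not pass directly to weak limits; the fact that the finite-volume constant is uniform in $N$ and in $X_{|B_R^c}$ is what makes the approximation argument close.
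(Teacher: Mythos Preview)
Your proposal is correct and follows essentially the same route as the paper. Your coordinate-by-coordinate mean-value-plus-Jensen averaging is exactly the paper's isotropic averaging operator $\Iso_\ep = \nu_\ep^{\otimes n}\ast\cdot$ unpacked one coordinate at a time (the paper's ``self-adjointness'' step is your Fubini step), yielding the identical finite-volume bound on $\mathbb Q^\mu_{n,R}$ uniform in $\mu$ and $N$. For the passage to the weak limit the paper does precisely what you sketch: it rewrites the conditional inequality as an unconditional one tested against $F\in\sigma(X_{|B_R^c})$ and then invokes the monotone class theorem after checking it on an algebra of continuity sets. One small simplification you are missing: rather than perturbing the radii of $B_R$ and the $A_i$, the paper uses that $\P$ has intensity dominated by Lebesgue measure (from \cite[Theorem~3]{T23}), so $\P(\{X(\partial B_R)\geq 1\})=0$ and the events $\{X(B_R)=n\}$, $\{X(A_i)\geq 1\}$ are already $\P$-continuity sets for every $R$ and every choice of balls $A_i$; no perturbation is needed.
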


	Note that \eref{infvolrhon} holds $\P$-a.s.\ simultaneously for all disjoint balls $A_1,\ldots,A_n$ within $B_R$. Indeed, we can insist it holds for all balls with rational center and radii and then use that any ball is a decreasing limit of ones with rational data. \tref{infvolrhon}, along with the fact that $X$ is a.s.\ simple under $\P$, implies that the point process $X$ under $\P$ conditioned on $X_{|B_R^c}$ and on $X(B_R)$ has a law which, considered as a probability measure on $(B_R)^{X(B_R)}$, is a.s.\ absolutely continuous with respect to Lebesgue measure ({\it mutual} absolute continuity does not follow, so this is weaker than the full notion of  tolerance). Still, we may conclude, for example, that the two-dimensional Coulomb gas cannot be center-of-mass rigid. We have not attempted to make \tref{infvolrhon} as sharp or general as possible. One can for instance extract particle repulsion factors in the RHS of \eref{infvolrhon}, quantitative estimates on $R,n,\delta$ dependence, etc..

	\section{Proof of \pref{main}} \label{s.rigid}
	We first reduce the problem into a question about a certain finite particle Coulomb gas and define some of the central operators and events in the proof. Then we give a heuristic -- but essentially correct -- overview of the proof, followed by a fully rigorous proof.
	
	\subsection{First reductions and definitions.} \ Let $E_{n,R}$ be the event that $X$ has $n$ particles in the open disk $B_R$ of radius $R$. We have that
	$$
	\P\(E_{n,R} \ \big | \ X_{|B_R^c}\) = \lim_{S \to \infty} \P\(E_{n,R}\ \big | \ X_{|B_S \setminus B_R}\) 
	$$
	The limit can be understood in $L^2(\P)$. We will study $\P_N(E_{n,R} \ | \ X_{|B_S \setminus B_R})$ for large $N$ to deduce results about $ \P\(E_{n,R}\ \big | \ X_{|B_S \setminus B_R}\)$. It is unclear whether the law of the $\P_N(E_{n,R} \ | \ X_{|B_S \setminus B_R} )$, $n \geq 1$, under $\P_N$ will converge to that of $\P(E_{n,R} \ | \ X_{|B_S \setminus B_R} )$ under $\P$. We overcome this obstacle by proving quantitative inequalities for the conditioned $\P_N$ that do allow $N \to \infty$ limits under the assumption of weak convergence only.

	For a finite nonnegative measure $\mu$ supported within $B_S \setminus B_R$, we define
	\begin{equation} \label{e.Qdef}
		\mbb Q_{M,S,R}^{\mu}(dY_M) \propto \exp(-\beta \mcl H^{\mu}(Y_M)) \prod_{i =1}^M \1_{B_R \cup B_S^c}(y_i) dy_i,
	\end{equation}
	as a probability measure on point configurations $Y_M = (y_1,\ldots,y_M) \in (B_R \cup B_S^c)^M$ or, equivalently, the point process $Y = \sum_{i=1}^M \delta_{y_i}$, where
	\begin{equation} \label{e.Hmudef}
	\mcl H^{\mu}(Y_M) = \mcl H(Y_M) + \int_{B_S \setminus B_R} \sum_{i=1}^M \g(x - y_i) \mu(dx).
	\end{equation}
	Then $\mbb Q^{X_{|B_S \setminus B_R}}_{M,S,R}$ with $M = N - X(B_S \setminus B_R)$ is the law of $Y = X_{|B_R \cup B_S^c}$ under $\P_N$ conditioned on $X_{|B_S \setminus B_R}$. With the parameters $M,S,R, \mu$ fixed, we will abbreviate $\mbb Q = \mbb Q^{\mu}_{M,S,R}$. All implicit constants $C$ will be independent of $M$, $S$, and $\mu$, but may depend continuously on $R$ and $n$ (as well as $\beta$, $\beta^{-1}$, and $\sup \Delta W$). Later, we will introduce a third scale $T \gg S$ (we eventually take $T \to \infty$). 
	
	
	We will use ``mimicry" operators, first defined in \cite{T23}, to transport points in and out of $B_R$. Let $\nu$ be the uniform probability measure on $B_1(0) \subset \R^d$ (we will use $\nu$ to denote the density of $\nu$). For $i,j \in [M] := \{1,2,\ldots,M\}$, $i \ne j$, and any nice enough function $f : (\R^d)^2 \to \R$, we define $\Mim_{i \to j} f : (\R^d)^2 \to \R$ by
	\begin{equation} \label{e.mimdef}
		\Mim_{i \to j} f(y_i,y_j) = \int_{\R^d} f(y_j + x, y_j) \nu(dx).
	\end{equation}
	By convention, we let $\Mim_{i \to j}$ act on functions of $Y_M$ or any set of labeled coordinates by freezing all coordinates not labeled $i,j$ and applying \eref{mimdef}.
	
	\begin{lemma} \label{l.Mim}
		Let $d \geq 3$. For any finite, compactly supported, nonnegative measure $\mu$ and $Y_M \in (\R^d)^M$, we have
		\begin{equation} \label{e.Mim.energy}
			\min \( \Mim_{i \to j} \mcl H^\mu(Y_M), \Mim_{j\to i} \mcl H^\mu(Y_M) \) \leq \mcl H^\mu(Y_M) + C.
		\end{equation}
		For the adjoint operator $\Mim_{i \to j}^\ast$ and any bounded, compactly supported function $f$, we have
		\begin{equation} \label{e.Mim.adj}
			\Mim_{i \to j}^\ast f(y_i,y_j) = \nu(y_i - y_j) \int_{\R^d} f(x,y_j) dx.
		\end{equation}
	\end{lemma}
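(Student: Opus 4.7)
The plan is to prove the adjoint formula \eref{Mim.adj} by a direct change of variables and to prove the energy bound \eref{Mim.energy} by combining the superharmonicity of $\g$ and of a suitable quadratic modification of $W$ with a symmetrization trick between $\Mim_{i\to j}$ and $\Mim_{j\to i}$.

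For the energy estimate, I would first decompose $\mcl H^\mu(Y_M) = E_i + E_j + E_{ij} + E_{\mathrm{rest}}$, where
\[
E_i := \sum_{k \ne i, j} \g(y_i - y_k) + W(y_i) + \int \g(x - y_i)\, \mu(dx),
\]
$E_j$ is defined symmetrically, $E_{ij} := \g(y_i - y_j)$, and $E_{\mathrm{rest}}$ collects the terms depending on neither $y_i$ nor $y_j$ (these are fixed by both mimicry operators). The operator $\Mim_{i \to j}$ replaces $y_i$ by $y_j + x$ with $x \sim \nu$. Since $\g \geq 0$ is superharmonic on $\R^d$ for $d \geq 3$, the sub-mean value inequality yields $\int \g(y_j + x - y_k)\, \nu(dx) \leq \g(y_j - y_k)$ for each $k \ne i,j$, and the same inequality holds under integration against $\mu$. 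For $W$, the hypothesis $\Delta W \leq \delta^{-1}$ makes $W - \tfrac{\delta^{-1}}{2d}|y|^2$ superharmonic; combined with $\int x\, \nu(dx) = 0$ and the finiteness of $\int |x|^2\, \nu(dx)$, this gives $\int W(y_j + x)\, \nu(dx) \leq W(y_j) + C$. Summing, $\Mim_{i \to j} E_i \leq E_j + C$, while the cross term is handled by $\Mim_{i \to j} E_{ij} = \int_{B_1} \g(x)\, \nu(dx)$, a constant finite precisely because $d \geq 3$.

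By symmetric reasoning $\Mim_{j \to i} E_j \leq E_i + C$. Averaging the two mimicked Hamiltonians then gives
\[
\tfrac{1}{2}\bigl( \Mim_{i \to j} \mcl H^\mu + \Mim_{j \to i} \mcl H^\mu \bigr) \leq E_i + E_j + E_{\mathrm{rest}} + C = \mcl H^\mu - E_{ij} + C \leq \mcl H^\mu + C,
\]
where the last step uses $E_{ij} = \g(y_i - y_j) \geq 0$. Since the minimum is bounded above by the average, \eref{Mim.energy} follows. For the adjoint formula \eref{Mim.adj}, I would pair $\Mim_{i \to j} f$ against a test function $h$ on $(\R^d)^2$ and perform the substitution $u = y_j + x$ in the inner integral; this shows that $\Mim_{i \to j}^\ast h(u, y_j) = \nu(u - y_j) \int h(y_i, y_j)\, dy_i$, which is \eref{Mim.adj} after relabelling.

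The most delicate point will be justifying $\int \g(y_j + x - y_k)\, \nu(dx) \leq \g(y_j - y_k)$ in the case $|y_j - y_k| < 1$, when the singularity of $\g$ sits inside the ball of integration. Outside the unit ball Newton's shell theorem in fact gives equality, but inside one must invoke the full distributional superharmonicity $-\Delta \g = c_d \delta_0 \geq 0$; this argument relies on $\g \geq 0$, which is exactly the same positivity used to discard $E_{ij}$ at the last step, so both instances where $d \geq 3$ enters the statement are accounted for.
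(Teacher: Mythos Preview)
Your proof is correct and follows essentially the same approach as the paper: the same decomposition of $\mcl H^\mu$ into the $y_i$-, $y_j$-, cross-, and external parts, the same use of superharmonicity of $\g$ and the bound on $\Delta W$, and the same reliance on $\g\geq 0$ (the sole place $d\geq 3$ enters). The only cosmetic difference is that the paper concludes via a dichotomy (whichever of $E_i$, $E_j$ is smaller picks the favorable direction), whereas you bound the minimum by the average of the two mimicked energies; both are equivalent expressions of the same symmetrization. One minor imprecision: $\int_{B_1}\g\,d\nu$ is finite in $d=2$ as well, so its finiteness is not what singles out $d\geq 3$; the crucial use of $d\geq 3$ is, as you yourself note at the end, the nonnegativity of $\g$ needed to discard $E_{ij}$.
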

	\begin{proof}
		We break $\mcl H^\mu(Y_M)$ into parts:
		\begin{equation*}
			\mcl H^\mu(Y_M) =: \mcl H^\mu_{i,j, \mathrm{ext}}(Y_M) + \( \sum_{k \ne i,j} \g(x_i - x_k) + \g(x_j - x_k)\) + \g(x_i - x_j) + W(x_i) + W(x_j).
		\end{equation*}
		The term $\mcl H^\mu_{i,j, \mathrm{ext}}(Y_M)$ has no $y_i$ or $y_j$ dependence and so is invariant under the mimicry operators we consider. For the other pieces, we have
		\begin{equation*}
			\Mim_{i \to j} \g(x_i - x_k) \leq \g(x_j - x_k), \quad \Mim_{j \to i} \g(x_j - x_k) \leq \g(x_i - x_k)
		\end{equation*}
		since $\g$ is superharmonic, $\Mim_{i \to j} \g(x_i - x_j) \leq C$ since the singularity in $\g$ is integrable, and $\Mim_{i \to j} W(x_i) \leq W(x_j) + C$ since $\Delta W \leq C$. It follows that if
		\begin{equation*}
			\sum_{k \ne i,j} \g(x_j - x_k) + W(x_j) \leq \sum_{k \ne i,j} \g(x_i - x_k) + W(x_i),
		\end{equation*}
		then $\Mim_{i \to j} \mcl H^\mu(Y_M) \leq \mcl H^\mu(Y_M) - \g(x_i - x_j) + C$. If the reverse inequality is true, then the same estimate holds for $\Mim_{j \to i}$ in place of $\Mim_{i \to j}$. Since $\g \geq 0$, we have proved \eref{Mim.energy}. Note that this is the only time in the course of proving \pref{main} that we use $d \geq 3$ (in $d=2$, the Coulomb kernel is $-\log |x|$ which is unbounded below).
		
		Turning to \eref{Mim.adj}, we compute by Fubini's theorem that
		\begin{align*}
			\lefteqn{ \int_{\R^d \times \R^d} f_1(y_i,y_j) \Mim_{i \to j} f_2(y_i,y_j) dy_i dy_j  } \quad & \\ &= \int_{\R^d \times \R^d} \( \int_{\R^d} f_1(y_i,y_j) dy_1 \)f_2(y_j + x, y_j) \nu(x) dx dy_j.
		\end{align*}
		Substituting $y_j + x \mapsto x$ and re-labeling variables finishes the proof.
	\end{proof}
	
	\subsection{Outline of the argument.}\ The idea of the proof is to bound $\mbb Q(E_{n,R})$ by an expression involving $\mbb Q(E_{n+1,R})$ and $\mbb Q(E_{n-1,R})$ by either transporting a particle within $B_R$ to a microscopic neighborhood of a particle outside $B_S$ or by transporting a particle outside $B_S$ to a microscopic neighborhood of a particle within $B_R$. After indexing, we will assume $y_1 \in B_R$ and $y_{n+1} \in B_S^c$, and the transport will be realized through either $\Mim_{1 \to n+1}$ or $\Mim_{n+1 \to 1}$.
	
	The transport though will not always be possible: $y_1$ or $y_{n+1}$ may lie within distance $1$ of $\pa (B_S \setminus B_R)$, and so some transported configurations will have a point within $B_S \setminus B_R$, giving them formally infinite energy by the definition of $\mbb Q$ \eref{Qdef}. Moreover, we must track entropy factors involved in the transport, and this requires working on events for which certain density bounds hold. We ignore these technicalities for now to give basic outline of the argument. All inequalities below should be read heuristically, or modulo error terms which capture possible unusual behavior of the gas.
	\begin{itemize}
		\item Let $n \geq 1$. As a first step, we must ``index" the events involved, and so for $\I\subset [M]$ we define
		\begin{equation} \label{e.EIR.def}
			E_{\I,R} = \{y_i \in B_R \ \forall i \in \I\} \cap \{y_i \not \in B_R \ \forall i \not \in \I\}.
		\end{equation} Note that $\mbb Q(E_{n,R}) = \binom{M}{n} \mbb Q(E_{[n],R})$. Combinatorial prefactors associated to indexing will play a key role in the proof.

		\item In the case that it is favorable to transport particle $y_{n+1} \not \in B_R$ to a microscopic neighborhood of particle $y_1 \in B_R$ using $\Mim_{n+1 \to 1}$, in the sense of the minimum in \eref{Mim.energy}, the energy cost is bounded by $C$. The entropy cost is bounded by the reduction in volume of phase space for particle $y_{n+1}$, and so to quantify this we choose $y_{n+1}$ within the annulus $B_{2T} \setminus B_{T}$ for $T \gg S$. Since $y_{n+1}$ occupies a microscopic neighborhood of $y_1$ after transport, the entropy cost is $T^d$. This shows, roughly speaking, that
		\begin{equation*}
			\mbb Q(E_{[n],R} \cap \{\Mim_{n+1 \to 1} \text{ favorable}\} \cap \{y_{n+1} \in B_{2T} \setminus B_T \}) \leq C T^d \mbb Q(E_{[n+1],R}).
		\end{equation*}
		Since there are order $T^d$ particles in $B_{2T} \setminus B_T$, we have
		\begin{align*}
			\lefteqn{ \mbb Q(E_{[n],R} \cap \{\Mim_{n+1 \to 1} \text{ favorable}\})} \quad & \\ &\leq \frac{CM}{T^d} \mbb Q(E_{[n],R} \cap \{\Mim_{n+1 \to 1} \text{ favorable}\} \cap \{y_{n+1} \in B_{2T} \setminus B_T \}) \\ &\leq CM \mbb Q(E_{[n+1],R}).
		\end{align*}
		
		\item In the case that it is instead favorable to transport $y_1 \in B_R$ to $y_{n+1} \not \in B_R$, we find that the reduction in volume available to $y_1$ costs a factor of order $R^d = C$. We will however record on the RHS the information that $y_1$ and $y_{n+1}$ are close together after the transport. We find
		\begin{align*}
			\lefteqn{ \mbb Q(E_{[n],R} \cap \{\Mim_{1 \to n+1} \text{ favorable}\} \cap \{y_{n+1} \in B_{2T} \setminus B_T \}) } \quad & \\ &\leq C \mbb Q(E_{\{2,\ldots,n\}} \cap \{|y_1 - y_{n+1}| \leq 1\} \cap \{y_{1} \in B_{2T+1} \setminus B_{T-1}\}).
		\end{align*}
		We can use a similar bound as before on the LHS and combine with the previous step to see
		\begin{align*}
			\mbb Q(E_{[n],R}) &\leq \mbb Q(E_{[n],R} \cap \{\Mim_{n+1 \to 1} \text{ favorable}\}) + \mbb Q(E_{[n],R} \cap \{\Mim_{1 \to n+1} \text{ favorable}\}) \\
			&\leq CM \mbb Q({E_{[n+1],R}}) + \frac{CM}{T^d} \mbb Q(E_{\{2,\ldots,n\}} \cap \{|y_1 - y_{n+1}| \leq 1\} \cap \{y_{1} \in B_{2T+1} \setminus B_{T-1}\}).
		\end{align*}
		
		\item
		We apply the above argument to particles with index $j=n+1,n+2,\ldots,M$ in place of $n+1$, and we sum the inequalities. Note that the events $E_{[n] \cup \{j\},R}$ are disjoint, and the events $\{|y_1 - y_{j}| \leq 1\}$ have typically $O(1)$ overlap since $y_{1}$ has $O(1)$ points in a microscopic neighborhood around it. This shows
		\begin{align*}
			\mbb Q(E_{[n],R}) &\leq C\mbb Q\(\bigcup_{j=n+1}^M E_{[n] \cup \{j\}, R}\) + \frac{C}{T^d} \mbb Q(E_{\{2,\ldots,n\},R} \cap \{y_{1} \in B_{2T+1} \setminus B_{T-1}\}) \\ &\leq C\mbb Q\(\bigcup_{j=n+1}^M E_{[n] \cup \{j\}, R}\) + \frac{C}{M} \mbb Q(E_{\{2,\ldots,n\},R}).
		\end{align*}
		In the last line, we used that order $T^d$ particles are in $B_{2T+1} \setminus B_{T-1}$ typically, representing a fraction of $CT^d/M$ of all particles.
		
		\item We fully ``de-index" both sides to see
		\begin{equation*}
			\mbb Q(E_{n,R})  = {\binom{M}{n}} \mbb Q(E_{[n],R}) \leq C \mbb Q(E_{n+1,R}) + C\mbb Q(E_{n-1,R}).
		\end{equation*}
		Recall that $\mbb Q$ with $M = N - X(B_S \setminus B_R)$ is the measure $\P_N$ conditioned on $X_{|B_S \setminus B_R}$. We thus have (up to error terms)
		\begin{equation*}
			\mbb P_N(E_{n,R} \cap F)  \leq C \mbb P_N(E_{n+1,R} \cap F) + C\mbb P_N(E_{n-1,R} \cap F)
		\end{equation*}
		for any Borel set $F$ which is $X_{|B_S \setminus B_R}$ measurable. We can pass to $N \to \infty$ using weak convergence to obtain the same inequality with $\P$ in place of $\P_N$ under the additional assumption that $\P(\pa F) = 0$. Using the monotone class theorem, we extend the inequality to all Borel sets $F$ which are $X_{|B_S \setminus B_R}$-measurable, which then recovers an inequality for $\P$ conditioned on $X_{|B_S \setminus B_R}$. There are error terms that we have ignored above which we partially handle by sending $T \to \infty$. Then we send $S \to \infty$ to obtain an inequality with conditioning on $X_{|B_R^c}$, and finally we reduce the remaining error terms to $\ep$ by choosing $R$ large.
	\end{itemize}
	
	Notions of typical particle density on scale $T$ and on the microscopic scale are of great importance in making the above heuristics rigorous. For the scale $T$ lower bounds on density, we will eventually use the lower bound \eref{nonzerodensity} on the density associated to $\P$ which will hold as $T \to \infty$. For the microscopic scale upper bounds on particle density, we need a quantitative result that applies directly to the conditioned measures $\mbb Q$. The following theorem is a straightforward modification of the proof of \cite[Theorem 1.1]{T23}.
	
	\begin{theorem} \label{t.LL}
		Let $\mbb Q = \mbb Q^{\mu}_{M,S,R}$ for a finite nonnegative measure $\mu$ supported within $B_S \setminus B_R$ and $T \geq 100 S$. Let $E_R$ be any event measurable with respect to the points in $B_R$, and let $U \subset B_{5T/2} \setminus B_{3T/4}$ be a Borel set. For any $\rho_0 > 0$ and $1 < r < T/100$, we have
		\begin{equation}
			\mbb Q\( \{Y(B_r(y_1)) \geq \rho r^d \}  \ \big | \ E_R \cap \{y_1 \in U \} \cap \{ Y(B_{3T} \setminus B_{T/2}) \leq  \rho_0 T^d\}\) \leq e^{-c \beta \rho^2 r^{d-2}}
		\end{equation}
		for all $\rho \geq C = C(\rho_0, \sup \Delta W,\beta^{-1})$ and a dimensional constant $c > 0$, whenever the conditional probability on the RHS exists.
	\end{theorem}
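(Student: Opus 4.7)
The plan is to follow the proof of Theorem~1.1 of \cite{T23} --- the isotropic averaging / mimicry argument for the finite-volume $N$-particle Coulomb gas --- and verify that it transfers to the present conditioned measure $\mbb Q^\mu_{M,S,R}$. The main conceptual check is that the frozen data (the measure $\mu$ on $B_S \setminus B_R$ and the configuration in $B_R$ fixed by $E_R$) appears near $y_1$ only as a smooth external field, so that the local energy estimates of \cite{T23} apply essentially unchanged.

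The geometric hypotheses make this reduction immediate. Since $y_1 \in U \subset B_{5T/2} \setminus B_{3T/4}$, $T \geq 100 S$, $R < S$, and $r < T/100$, the ball $B_r(y_1)$ is contained in $B_{3T} \setminus B_{T/2}$ and is separated by distance $\geq T/2$ from $\supp\mu \cup B_R$. The potential generated by the frozen data is therefore harmonic on a ball of radius of order $T$ around $y_1$; standard gradient and Hessian estimates bound its Laplacian on $B_r(y_1)$ uniformly in $S$, $M$, and $\mu$. Combined with the standing assumption that $\sup \Delta W < \infty$, this produces an effective confining potential of exactly the type handled in \lref{Mim} and in \cite{T23}.

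With this reduction in hand, I would mirror the \cite{T23} argument: on the event $\{Y(B_r(y_1)) = k\}$ with $k \geq \rho r^d$, first index the $k$ relevant particles (incurring a combinatorial prefactor absorbed into $C$), then apply the isotropic averaging operator to relocate them to spread positions across $B_{3T} \setminus B_{T/2}$. The conditioning $\{Y(B_{3T} \setminus B_{T/2}) \leq \rho_0 T^d\}$ guarantees ample unoccupied target volume. By superharmonicity of $\g$ and the smoothness of the effective field, the Hamiltonian change under the transport is controlled by the Coulomb self-energy of a charge-$k$ blob in $B_r(y_1)$, which --- after combining with the volume/Jacobian factor from spreading --- produces the exponential $e^{-c\beta \rho^2 r^{d-2}}$ in the ratio of Boltzmann weights. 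Summing the resulting inequality over $k \geq \rho r^d$ gives a geometric-type series once $\rho$ exceeds the stated threshold depending on $\rho_0$, $\sup \Delta W$, and $\beta^{-1}$, yielding the claim.

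The hard part is precisely the localization check described above: one must verify quantitatively that $\mu$ and the frozen particles from $E_R$ can be absorbed into a smooth effective external potential without disturbing the energy bookkeeping of \cite{T23}. The separation estimate $\geq T/2$, together with elliptic regularity for harmonic functions at scale $T \gg r$, makes this essentially formal; once it is in place, the isotropic-averaging bookkeeping of \cite{T23} carries over verbatim, which is why the author characterizes the proof as a ``straightforward modification''.
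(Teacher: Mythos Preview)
Your identification of localization is right, but the mechanism you cite is a detour: the potential generated by $\mu$ and by the frozen $B_R$-configuration is exactly \emph{harmonic} on $B_{3T}\setminus B_{T/2}$, so isotropic averaging of the moved particles leaves it invariant by the mean value property. There is no Laplacian to bound and no elliptic regularity to invoke; this is the one-sentence ``perfect localization'' the paper states, not the hard part.

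The substantive gap is the transport. The paper does not relocate the crowded particles in a single step across $B_{3T}\setminus B_{T/2}$; it runs the \emph{iterative} scheme of \cite[Proposition~2.2]{T23}, moving the particles (other than $y_1$) from $B_{10^j r}(y_1)$ to $B_{10^{j+1}r}(y_1)$ for $j=0,1,\ldots$ and terminating at the first level $K$ with $10^{dK}\rho r^d>\rho_0 T^d$. The only new check is that $B_{10^K r}(y_1)\subset B_{3T}\setminus B_{T/2}$ for $\rho$ large enough, so the conditioning event furnishes the stopping criterion. Your one-shot spread to scale $L\sim T$ does not close: it incurs a confining-potential cost $\Iso_L W\leq W + CkL^2\sup\Delta W$ of order $\rho r^d T^2$, which is not offset by the volume factor $(r/L)^{dk}$ coming from $\Iso_L^\ast\1_A$. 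Relatedly, in the isotropic-averaging bookkeeping the exponential arises from that volume factor accumulated across scales, not from the ``Coulomb self-energy of a charge-$k$ blob'' as you write; without the multi-scale chain you do not recover the exponent $c\beta\rho^2 r^{d-2}$.
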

	\begin{proof}
		The result follows from the iterative procedure along balls $B_{10^k r}(y_1)$, $k=0,1,2,\ldots$, as in \cite[Proposition 2.2]{T23}. During the iteration, we transport particles within $B_{10^k r}(y_1)$ (but not $y_1$ itself) to the ball $B_{10^{k+1} r}(y_1)$. Importantly, we can terminate the iteration at a level $k$ such that $10^{dk} \rho r^d > \rho_0 T^d$ while still having $B_{10^k r}(y_1) \subset B_{3T} \setminus B_{T/2}$ as long as $\rho$ is large enough. Conditioning on points outside of $B_{3T} \setminus B_{T/2}$ has no bearing on the argument; the isotropic averaging method features perfect ``localization".
	\end{proof}
	
	\subsection{Details of the argument.}\ We give a rigorous proof following the outline. Define the events
	\begin{equation}
		\mathrm{Fav}_{i \to j} = \{ \Mim_{i \to j} \mcl H^{\mu}(Y_M) \leq \Mim_{j \to i} \mcl H^{\mu}(Y_M) \}
	\end{equation}
	and $ \mathrm{Fav}_{j \to i} = \mathrm{Fav}_{j \to i}^c$.
	The next lemma handles the case of transporting particle $n+1$ to a radius $1$ neighborhood of particle $1$.
	
	\begin{lemma} \label{l.outtoin}
		Let $A = E_{[n],R} \cap \mathrm{Fav}_{n+1 \to 1} \cap \{y_{n+1} \in B_{2T} \setminus B_T \} \cap \{y_1 \in B_{R-1}\}$. Then $\mbb Q(A) \leq CT^d \mbb Q(E_{[n+1],R})$.
	\end{lemma}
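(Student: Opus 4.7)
The plan is to convert the favorability constraint $\mathrm{Fav}_{n+1 \to 1}$ into a pointwise upper bound on the Gibbs weight $e^{-\beta \mcl H^\mu}$ via Jensen's inequality, and then to move the resulting mimicry operator off of the Gibbs weight and onto the indicator of the event using the adjoint formula \eref{Mim.adj}. The $y_{n+1}$-integration produced by the adjoint will contribute exactly the annulus volume $|B_{2T} \setminus B_T| \leq CT^d$, while the condition $y_1 \in B_{R-1}$ baked into $A$ will guarantee that the ``new'' $y_{n+1}$ supplied by the substitution lands in $B_R$, so that what survives is precisely $E_{[n+1],R}$.

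More concretely, I would set $B := E_{[n],R} \cap \{y_{n+1} \in B_{2T} \setminus B_T\} \cap \{y_1 \in B_{R-1}\}$, so $A = B \cap \mathrm{Fav}_{n+1 \to 1}$. On $\mathrm{Fav}_{n+1 \to 1}$, the bound \eref{Mim.energy} gives $\mcl H^\mu(Y_M) \geq \Mim_{n+1 \to 1} \mcl H^\mu(Y_M) - C$; applying Jensen's inequality for the convex function $e^{-\beta \cdot}$ against the probability measure $\nu$ defining $\Mim_{n+1 \to 1}$ then yields the pointwise estimate
\begin{equation*}
\1_A(Y_M)\, e^{-\beta \mcl H^\mu(Y_M)} \;\leq\; e^{\beta C}\, \1_B(Y_M)\, \Mim_{n+1 \to 1} e^{-\beta \mcl H^\mu}(Y_M).
\end{equation*}
Integrating against $\prod_i \1_{B_R \cup B_S^c}(y_i)\, dY_M$ and moving $\Mim_{n+1 \to 1}$ onto the indicator via \eref{Mim.adj}, I would obtain
\begin{equation*}
Z\, \mbb Q(A) \;\leq\; e^{\beta C} \int e^{-\beta \mcl H^\mu(Y_M)} \cdot \Mim_{n+1 \to 1}^{\ast}\!\Big[\1_B \prod_i \1_{B_R \cup B_S^c}(y_i)\Big](Y_M)\, dY_M.
\end{equation*}

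Evaluating the adjoint replaces the original $y_{n+1}$ appearing inside the indicator by an auxiliary variable $x$, multiplies by $\nu(y_{n+1} - y_1)$, and integrates over $x \in \R^d$. The only $x$-dependent constraints come from $x \in B_{2T} \setminus B_T$ (which for $T > S$ automatically forces $x \in B_S^c \setminus B_R$, hence $x \in B_R \cup B_S^c$), so the $x$-integral contributes at most $|B_{2T} \setminus B_T| \leq CT^d$. The surviving factor $\nu(y_{n+1} - y_1)$ is bounded by a dimensional constant $c_d$ and forces $|y_{n+1} - y_1| < 1$, which combined with the preserved constraint $y_1 \in B_{R-1}$ forces $y_{n+1} \in B_R$. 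Together with the untouched conditions $y_i \in B_R$ for $i \in [n]$ and $y_i \notin B_R$ for $i > n+1$, this reconstructs the full indicator $\1_{E_{[n+1],R}} \prod_i \1_{B_R \cup B_S^c}(y_i)$, so the integral on the right is bounded by $CT^d Z\, \mbb Q(E_{[n+1],R})$, finishing the proof after dividing by $Z$. The only real subtlety is this last bookkeeping step: one must check that after substituting $y_{n+1} \to x$, every indicator that is dropped is either absorbed into the $x$-volume $CT^d$ or recovered from $|y_{n+1} - y_1| < 1$ and $y_1 \in B_{R-1}$. The $y_1 \in B_{R-1}$ condition is inserted into $A$ precisely so that the latter works.
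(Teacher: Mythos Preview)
Your proof is correct and follows essentially the same approach as the paper: use the favorability condition together with \eref{Mim.energy} and Jensen's inequality to replace $e^{-\beta \mcl H^\mu}$ by $\Mim_{n+1\to 1}e^{-\beta \mcl H^\mu}$, pass to the adjoint via \eref{Mim.adj}, harvest the factor $|B_{2T}\setminus B_T|\leq CT^d$ from the $y_{n+1}$-integration, and use $y_1\in B_{R-1}$ together with $|y_{n+1}-y_1|<1$ to land in $E_{[n+1],R}$. The only cosmetic difference is that you drop $\mathrm{Fav}_{n+1\to 1}$ and work with $\1_B$ after the energy bound, whereas the paper keeps $\1_A$ throughout; both are fine since $\1_A\leq \1_B$.
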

	\begin{proof}
		For some normalizing factor $\mcl Z$, we have
		\begin{align*}
			\mbb Q(A) &= \frac{1}{\mcl Z} \int_{(\R^d)^M} \1_{(B_R \cup B_S^c)^M}(Y_M) \1_A(Y_M) e^{-\beta \mcl H^\mu(Y_M)} dY_M \\
			&\leq \frac{C}{\mcl Z} \int_{(\R^d)^M} \1_{(B_R \cup B_S^c)^M}(Y_M) \1_A(Y_M) e^{-\beta \Mim_{n+1 \to 1}\mcl H^\mu(Y_M)} \\
			&\leq \frac{C}{\mcl Z} \int_{(\R^d)^M} \1_{(B_R \cup B_S^c)^M}(Y_M) \1_A(Y_M) \Mim_{n+1 \to 1} \(e^{-\beta \mcl H^\mu(\cdot)} \)(Y_M) dY_M \\
			&= \frac{C}{\mcl Z} \int_{(\R^d)^M} \Mim^\ast_{n+1 \to 1} \( \1_{(B_R \cup B_S^c)^M} \1_A \)(Y_M) e^{-\beta \mcl H^\mu(Y_M)}dY_M.
		\end{align*}
		Here, we used the inequality \eref{Mim.energy} and Jensen's inequality (since $\Mim_{i \to j}$ is given by integration against a probability measure). Note that for any $y_1,\ldots,y_n,y_{n+2},\ldots,y_M$
		$$
		\int_{\R^d} \1_A(Y_M) \1_{(B_R \cup B_S^c)^M}(Y_M) dy_{n+1} \leq CT^d \1_{B_{R-1}}(y_1) \prod_{i=2}^{n} \1_{B_R}(y_i) \prod_{i=n+2}^M \1_{B_S^c}(y_i),
		$$
		so we can compute using \eref{Mim.adj} that
		\begin{align*}
			\Mim^\ast_{n+1 \to 1} \( \1_{(B_R \cup B_S^c)^M} \1_A \)(Y_M) &\leq CT^d \1_{B_1(0)}(y_1 - y_{n+1})\1_{B_{R-1}}(y_1) \prod_{i=2}^{n} \1_{B_R}(y_i) \prod_{i=n+2}^M \1_{B_S^c}(y_i) \\
			&\leq CT^d \1_{(B_R \cup B_S^c)^M}(Y_M) \1_{E_{[n+1],R}}(Y_M).
		\end{align*}
		In the last line, we used that $y_1 \in B_{R-1}$ and $|y_1 - y_{n+1}| < 1$ implies that $y_{n+1} \in B_R$. We conclude
		\begin{equation}
			\mbb Q(A) \leq \frac{CT^d}{\mcl Z} \int_{(\R^d)^M}\1_{(B_R \cup B_S^c)^M}(Y_M) \1_{E_{[n+1],R}}(Y_M) e^{-\beta \mcl H^\mu(Y_M)} dY_M = CT^d \mbb Q(E_{[n+1],R}),
		\end{equation}
		which is the desired result.
	\end{proof}
	
	Next, we handle the transport of particle $1$ to particle $n+1$.
	\begin{lemma} \label{l.intoout}
		Let $\rho_0 > 0$ and
		$$
		A' = E_{[n],R} \cap \mathrm{Fav}_{1 \to n+1} \cap \{y_{n+1} \in B_{2T} \setminus B_T\} \cap \{ Y(B_{3T} \setminus B_{T/2} ) \leq \rho_0 T^d \}.
		$$
		Then 
		$$
		\mbb Q (A') \leq \frac{CT^d}{(M-n)(M-n+1)} \mbb Q(E_{\{2,\ldots,n\},R})
		$$
		for $C$ dependent on $\rho_0$ (and $R$ and $\delta$).
	\end{lemma}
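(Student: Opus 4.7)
The plan is to mirror the proof of \lref{outtoin} using $\Mim_{1 \to n+1}$ in place of $\Mim_{n+1 \to 1}$. Applying \eref{Mim.energy} on $\mathrm{Fav}_{1 \to n+1}$, Jensen's inequality, and the adjoint formula \eref{Mim.adj}, I obtain
\[ \mbb Q(A') \leq \frac{C}{\mcl Z} \int \Mim^\ast_{1 \to n+1}\bigl(\1_{A'}\, \1_{(B_R \cup B_S^c)^M}\bigr)(Y_M)\, e^{-\beta \mcl H^\mu(Y_M)}\, dY_M. \]
Unlike in \lref{outtoin}, the pre-mimicry position $x$ of $y_1$ is confined to $B_R$, so integrating $\1_{A'}(x, y_2, \ldots, y_M) \1_{B_R \cup B_S^c}(x)$ over $x$ contributes only a constant $|B_R| \leq C$ rather than a $T^d$ factor. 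The post-mimicry position lies in $B_1(y_{n+1}) \subset B_{2T+1} \setminus B_{T-1} \subset B_S^c$ since $T \geq 100 S$. Collecting indicators gives $\mbb Q(A') \leq C \mbb Q(B)$, where
\[ B := E_{\{2,\ldots,n\},R} \cap \{y_{n+1} \in B_{2T} \setminus B_T,\ |y_1 - y_{n+1}| < 1\} \cap \{Y(B_{3T} \setminus B_{T/2}) \leq 2\rho_0 T^d\}. \]

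To relate $\mbb Q(B)$ to $\mbb Q(E_{\{2,\ldots,n\},R})$, I exploit the permutation invariance of $\mbb Q$ over $[M]$. Setting $K := \{1\} \cup \{n+1, \ldots, M\}$, for each ordered pair of distinct $i, j \in K$ define $B_{i,j}$ to be the analog of $B$ with indices $1$ and $n+1$ replaced by $i$ and $j$. Then $\mbb Q(B_{i,j}) = \mbb Q(B)$ for all such pairs. Summing and bounding the number of admissible pairs (for each $j \in K$ with $y_j$ in the annulus, the index $i$ must lie in $B_1(y_j)$), I get
\[ (M-n+1)(M-n)\, \mbb Q(B) = \E_{\mbb Q}\Bigl[\sum_{(i,j) \in K^2,\, i \ne j} \1_{B_{i,j}}\Bigr] \leq \E_{\mbb Q}\Bigl[\1_{E_{\{2,\ldots,n\},R}}\, \1_{\mathrm{density}}\!\!\sum_{j \in K,\, y_j \in B_{2T} \setminus B_T}\!\! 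Y(B_1(y_j))\Bigr]. \]

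To close, I invoke \tref{LL} with $r = 1$ and integrate its subgaussian tail to obtain $\E_{\mbb Q}[Y(B_1(y_{n+1})) \mid E_{\{2,\ldots,n\},R} \cap \{y_{n+1} \in B_{2T} \setminus B_T\} \cap \mathrm{density}] \leq C$; crucially, $E_{\{2,\ldots,n\},R}$ is measurable with respect to the points in $B_R$, as it specifies exactly which labels occupy $B_R$ along with their positions. Using the permutation $j \leftrightarrow n+1$ to pull this uniform bound out of the sum over $j \in K$, then applying the identity $(M-n+1)\, \mbb Q(\{y_{n+1} \in B_{2T} \setminus B_T\} \cap E \cap \mathrm{density}) = \E_{\mbb Q}[Y(B_{2T} \setminus B_T)\, \1_E\, \1_{\mathrm{density}}] \leq \rho_0 T^d\, \mbb Q(E_{\{2,\ldots,n\},R})$ (since on $E$ every particle in the annulus is outer-labeled, and the density event gives the volume bound), we reach $(M-n+1)(M-n)\, \mbb Q(B) \leq C \rho_0 T^d\, \mbb Q(E_{\{2,\ldots,n\},R})$. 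Combined with $\mbb Q(A') \leq C \mbb Q(B)$, this finishes the proof.

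The main subtlety lies in verifying that \tref{LL} applies with the conditioning event $E_{\{2,\ldots,n\},R}$, since this event fixes the labels outside $B_R$ (implicitly via $K$) rather than just the unlabeled configuration inside $B_R$. However, the isotropic averaging iteration underlying \tref{LL} operates entirely within the annulus $B_{3T} \setminus B_{T/2} \subset B_S^c$ and does not exchange particles between inside and outside; constraints of the form ``labels in $K$ lie in $B_S^c$'' are automatically preserved by such moves, so the tail bound goes through with this additional conditioning.
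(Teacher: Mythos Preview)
Your proof is correct and follows essentially the same approach as the paper. Both proofs apply $\Mim_{1\to n+1}$ with \eref{Mim.energy}, Jensen, and the adjoint formula to reach the intermediate event (the paper's \eref{aftertransport}, your $B$), then use exchangeability together with \tref{LL} for the microscopic bound and the density cap $Y(B_{3T}\setminus B_{T/2})\leq \rho_0 T^d$ for the macroscopic count; the only cosmetic differences are that the paper performs the two exchangeability reductions sequentially (first over the ``close neighbor'' index $j$, then over the ``annulus'' index $i$) whereas you do a single double sum over ordered pairs in $K$, and the paper tracks the post-transport density as $\rho_0 T^d+1$ rather than your $2\rho_0 T^d$. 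Your final paragraph justifying that the labeled event $E_{\{2,\ldots,n\},R}$ is admissible conditioning for \tref{LL} is a point the paper leaves implicit.
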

	\begin{proof}
		Consider any fixed $Y_M$. We have (for $C = C(R)$ and $Y'_M = (y'_1,y_2,\ldots,y_M)$)
		\begin{align*}
			\lefteqn{ \int_{\R^d} \1_{(B_R \cup B_S^c)^M}(Y'_M) \1_{A'}(Y'_M) dy'_1 } \quad & \\ &\leq C \1_{B_{2T} \setminus B_T}(y_{n+1})\1_{\{Y(B_{3T} \setminus B_{T/2}) \leq \rho_0 T^d + 1\}}(Y_M)\prod_{i=2}^{n} \1_{B_R}(y_i) \prod_{i=n+2}^M \1_{B_S^c}(y_i).
		\end{align*}
		Indeed, the integrand on the LHS is only nonzero for $y'_1 \in B_R$, and one can check that if any of the indicator functions on the RHS evaluate to $0$ then so does the LHS integrand. Therefore,
		\begin{align*}
			\lefteqn{\Mim^\ast_{1 \to n+1} (\1_{(B_R \cup B_S^c)^M}\1_{A'})(Y_M)} \quad & \\ &\leq C \1_{(B_R \cup B_S^c)^M}(Y_M) \1_{E_{\{2,\ldots,n\},R}}(Y_M) \1_{\{Y(B_{3T} \setminus B_{T/2})  \leq \rho_0 T^d + 1\}}(Y_M) \\ &\quad \times \1_{B_1(0)}(y_1 - y_{n+1}) \1_{B_{2T+1} \setminus B_{T/2-1}}(y_1).
		\end{align*}
		We can apply a nearly identical computation as in the proof of \lref{outtoin}, using also \lref{Mim}, to see
		\begin{align} \label{e.aftertransport}
			 \mbb Q(A')  \leq C \mbb Q\big (& E_{\{2,\ldots,n\},R} \cap \{Y(B_{3T} \setminus B_{T/2}) \leq \rho_0 T^d +1 \} \\\notag &\quad \cap \{|y_1 - y_{n+1}| < 1 \} \cap \{y_1 \in B_{2T+1} \setminus B_{T/2-1} \} \big ).
		\end{align}
		Define 
		$$A''_i = \{y_i \in B_{2T+1} \setminus B_{T/2-1} \} \cap \{Y(B_{3T} \setminus B_{T/2}) \leq \rho_0 T^d +1 \}.$$
		We have
		$$
		\sum_{j=n+1}^M \1_{\{|y_1 - y_j| < 1\} } \leq \sum_{q = 1}^M (q-1) \1_{\{Y(B_1(y_1)) = q \}}
		$$
		pointwise in $Y_M$. Multiplying by $\1_{E_{\{2,\ldots,n\},R} \cap A''_1}$, taking expectation with respect to $\mbb Q$, and using exchangeability shows
		$$
		(M - n) \mbb Q(E_{\{2,\ldots,n\},R} \cap A''_1 \cap \{|y_1 - y_{n+1}| < 1\}) \leq \sum_{q =1}^M (q-1) \mbb Q(E_{\{2,\ldots,n\},R} \cap A''_1 \cap \{Y(B_1(y_1)) \geq q\}).
		$$
		The summand in the RHS is bounded by $e^{-c \beta q^2}\mbb Q(E_{\{2,\ldots,n\},R} \cap A''_1)$ for $q$ large enough relative to $\rho_0$ by \tref{LL}. Thus
		\begin{equation} \label{e.outsummed}
			Q(E_{\{2,\ldots,n\},R} \cap A''_1 \cap \{|y_1 - y_{n+1}| < 1\}) \leq \frac{C}{M-n} \mbb Q(E_{\{2,\ldots,n\},R} \cap A''_1).
		\end{equation}
		We have pointwise that
		$$
		\sum_{i \in \{1,n+1,\ldots,M\}} \1_{E_{\{2,\ldots,n\},R} \cap A''_i} \leq (\rho_0 T^d + 1) \1_{E_{\{2,\ldots,n\},R}}.
		$$
		Once again taking expectation and using exchangeability shows
		\begin{equation} \label{e.insummed}
			\mbb Q(E_{\{2,\ldots,n\},R} \cap A''_1) \leq \frac{\rho_0 T^d + 1}{M -n + 1} \mbb Q(E_{\{2,\ldots,n\},R}).
		\end{equation}
		Combining \eref{aftertransport}, \eref{outsummed}, and \eref{insummed} finishes the proof.
	\end{proof}
	
	\begin{proposition} \label{p.mainQ}
		For any $\rho_0 > 2$ and $\rho_0 T^d \geq 2n$, we have
		\begin{equation} \label{e.mainQ}
			\mbb Q(E_{n,R}) \leq C \(\mbb Q(E_{n+1,R}) + \mbb Q( E_{n-1,R}) \) + \mathrm{Bad}
		\end{equation}
		for
		\begin{align*}
		\mathrm{Bad} &= \mbb Q(\{Y(B_{3T} \setminus B_{T/2}) > \rho_0 T^d\}) \\ &\quad + \mbb Q(\{Y(B_{2T} \setminus B_T) < \rho_0^{-1} T^d\}) + \mbb Q(\{Y(B_{R} \setminus B_{R-1}) \geq (1-\rho_0^{-1}) n\}).
		\end{align*}
		The constant $C$ depends on $\rho_0$ but not $T$ large.
	\end{proposition}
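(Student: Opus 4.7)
The plan is to implement the outline rigorously by combining the two mimicry lemmas with a two-level exchangeability argument. First, I package the three terms in $\mathrm{Bad}$ into a single good event
$$G = \{Y(B_{3T} \setminus B_{T/2}) \leq \rho_0 T^d\} \cap \{Y(B_{2T} \setminus B_T) \geq \rho_0^{-1} T^d\} \cap \{Y(B_R \setminus B_{R-1}) < (1-\rho_0^{-1})n\},$$
so that $\mbb Q(G^c) \leq \mathrm{Bad}$ and the task reduces to bounding $\mbb Q(E_{n,R} \cap G)$. Since $G$ is invariant under relabeling of particles, exchangeability gives $\mbb Q(E_{n,R} \cap G) = \binom{M}{n}\mbb Q(E_{[n],R} \cap G)$, and I am reduced to bounding the indexed probability.

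Next, I use the density bounds in $G$ to simultaneously insert the two localization hypotheses required by the lemmas. On $E_{[n], R} \cap G$ the inside count $N_{R-1} := \#\{i \in [n] : y_i \in B_{R-1}\}$ exceeds $\rho_0^{-1} n$, while the outside-shell count $N_{2T,T} := \#\{j > n : y_j \in B_{2T} \setminus B_T\}$ is at least $\rho_0^{-1} T^d$. Writing $\1_{E_{[n], R} \cap G}\, N_{R-1}\, N_{2T,T}$ as a double sum of indicators and averaging separately over the symmetric groups acting on $[n]$ and on $\{n+1, \ldots, M\}$ yields
$$\mbb Q\bigl(E_{[n], R} \cap G \cap \{y_1 \in B_{R-1}\} \cap \{y_{n+1} \in B_{2T} \setminus B_T\}\bigr) \geq \frac{\rho_0^{-2} T^d}{M-n} \, \mbb Q(E_{[n], R} \cap G).$$
I then split the event on the left by the dichotomy $\mathrm{Fav}_{n+1 \to 1} \sqcup \mathrm{Fav}_{1 \to n+1}$, applying \lref{outtoin} to the first piece and \lref{intoout} to the second (the latter after using exchangeability to rewrite $E_{\{2,\ldots,n\}, R}$ as $E_{[n-1], R}$, and noting that $G$ contains the density hypothesis of \lref{intoout}). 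This produces the upper bound $CT^d \mbb Q(E_{[n+1], R}) + \frac{CT^d}{(M-n)(M-n+1)} \mbb Q(E_{[n-1], R})$, and combining with the lower bound above cancels the factor $T^d/(M-n)$.

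Finally, multiplying the resulting inequality by $\binom{M}{n}$ and invoking the binomial identities $\binom{M}{n}(M-n) = (n+1)\binom{M}{n+1}$ and $\binom{M}{n}/(M-n+1) = \binom{M}{n-1}/n$ re-indexes both sides in terms of $\mbb Q(E_{n \pm 1, R})$, with $n$-dependent prefactors absorbed into $C$. The main subtlety I anticipate is the combinatorial bookkeeping: the three scalings in $G$ must be tuned so that the $T^d$-growth of \lref{outtoin}, the $(M-n)(M-n+1)$-denominator of \lref{intoout}, and the exchangeability factor $T^d/(M-n)$ all cancel exactly against the re-indexing factors from the binomial coefficients, leaving no residual $M$- or $T$-dependence. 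Beyond this accounting, the analytic input is entirely contained in the two mimicry lemmas, and I expect no further difficulty.
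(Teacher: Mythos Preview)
Your proposal is correct and follows essentially the same approach as the paper: index, use the density bounds in $G$ via exchangeability to localize $y_1 \in B_{R-1}$ and $y_{n+1} \in B_{2T}\setminus B_T$, split by favorability, apply \lref{outtoin} and \lref{intoout}, and de-index via the binomial identities. The only cosmetic difference is that you perform the two exchangeability localizations simultaneously through the product $N_{R-1}\,N_{2T,T}$ rather than sequentially as the paper does; your version is slightly cleaner in that it avoids the stray $-n$ correction (and hence the use of the hypothesis $\rho_0^{-1}T^d \geq 2n$) that appears in the paper's bound \eref{manyouter}.
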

	\begin{proof}
		We will first partition $E_{n,R}$ into a piece in which reasonable density bounds hold on certain regions and its complement. We abbreviate
		\begin{equation}
			D_1 = \{Y(B_{3T} \setminus B_{T/2}) \leq \rho_0 T^d\}, \quad D_2 = \{Y(B_{2T} \setminus B_T) \geq \rho_0^{-1} T^d\}, \quad D_3 = \{Y(B_{R-1}) \geq \rho_0^{-1} n\},
		\end{equation}
		and write
		\begin{equation} \label{e.Enr.decomp}
			\mbb Q(E_{n,R}) \leq \mbb Q(E_{n,R} \cap D_1 \cap D_2 \cap D_3) + \mathrm{Bad}.
		\end{equation}
		Here, we have bounded $\mbb Q(D_1^c) + \mbb Q(D_2^c) + \mbb Q(E_{n,R} \cap D_3^c)$ by the term $\mathrm{Bad}$. We focus on bounding the first term on the RHS. First, we introduce indices by
		$$
		\mbb Q(E_{n,R} \cap D_1 \cap D_2 \cap D_3) = \binom{M}{n} \mbb Q(E_{[n],R} \cap D_1 \cap D_2 \cap D_3).
		$$
		
		We now show that $D_2 \cap D_3$ occurring ensures there are enough index pairs $(i,j)$ with $y_i \in B_R$ and $y_j \in B_{2T} \setminus B_T$ to apply our arguments effectively. We start by noting pointwise in $Y_M$:
		$$
		\sum_{j=n+1}^M \1_{\{y_j \in B_{2T} \setminus B_T\}} \geq (\rho_0^{-1} T^d - n) \1_{D_2}.
		$$
		It follows that
		\begin{equation} \label{e.manyouter}
			\mbb Q(E_{[n],R} \cap D_1 \cap D_2 \cap D_3) \leq \frac{M - n}{\rho_0^{-1} T^d - n}\mbb Q(E_{[n],R} \cap D_1 \cap \{y_{n+1} \in B_{2T} \setminus B_T \} \cap D_3).
		\end{equation}
		Similarly, we have
		$$
		\sum_{i=1}^n \1_{\{y_i \in B_{R-1}\} \cap E_{[n],R}} \geq \rho_0^{-1} n \1_{ E_{[n],R} \cap D_3}
		$$
		and so
		\begin{equation} \label{e.manyinner}
			\mbb Q(E_{[n],R} \cap D_1 \cap \{y_{n+1} \in B_{2T} \setminus B_T \} \cap D_3) \leq C \mbb Q(E_{[n],R} \cap D_1 \cap \{y_{n+1} \in B_{2T} \setminus B_T \} \cap \{y_1 \in B_{R-1}\}).
		\end{equation}
		By \lref{outtoin} and \lref{intoout}, we may bound
		\begin{align} \label{e.applylemma}
			\lefteqn{ \mbb Q(E_{[n],R} \cap D_1 \cap \{y_{n+1} \in B_{2T} \setminus B_T \} \cap \{y_1 \in B_{R-1}\}) }\quad & \\ \notag &\leq CT^d \mbb Q(E_{[n+1],R}) + \frac{CT^d}{(M-n)(M-n+1)} \mbb Q(E_{\{2,\ldots,n\},R}).
		\end{align}
		Combining \eref{manyouter}, \eref{manyinner}, and \eref{applylemma}, we find
		\begin{align}
			\lefteqn{ \mbb Q(E_{[n],R} \cap D_1 \cap D_2 \cap D_3) } \quad & \\ \notag  &\leq \frac{C(M-n) T^d}{\rho_0^{-1} T^d - n} \mbb Q(E_{[n+1],R}) + \frac{C T^d}{(\rho_0^{-1} T^d - n)(M-n+1)} \mbb Q(E_{\{2,\ldots,n\},R}) \\ \notag &\leq CM \mbb Q(E_{[n+1],R}) + \frac{C}{M-n+1}  \mbb Q(E_{\{2,\ldots,n\},R}),
		\end{align}
		where in the last line we used $\rho_0^{-1} T^d \geq 2n$. We ``de-index" the events $E_{\I,R}$ for index sets $\I$ to see
		$$
		\mbb Q(E_{n,R} \cap D_1 \cap D_2 \cap D_3) \leq C \mbb Q(E_{n+1,R}) + C \mbb Q(E_{n-1,R}).
		$$
		Inserting this bound into \eref{Enr.decomp} finishes the proof.
	\end{proof}
	
	\begin{proof}[Proof of \pref{main}] 
		In what follows, we consider $X$ as a random variable taking values in $\mathcal{N}$, the space of locally finite, integer valued (nonnegative) measures on $\R^d$, where $X$ is distributed by either $\P$ or $\P_N$. We let $\mcl B(\mcl N)$ be the Borel $\sigma$-algebra on $\mcl N$, which is the smallest $\sigma$-algebra for which the maps $X \mapsto X(A)$ are measurable for all Borel sets $A \subset \R^d$ (c.f.\ \cite[Proposition 9.1.IV]{DVJ08}). Note that the Borel $\sigma$-algebra $\sigma(X_{|U})$ generated by $X_{|U}$ for a fixed Borel set $U \subset \R^d$ is the smallest $\sigma$-algebra for which every $X \mapsto X(A)$ is measurable for all Borel sets $A \subset U$.
		
		Recall that $\mbb Q_{M,S,R}^{X_{|B_{S} \setminus B_R}}$ is the law of $Y = X_{|B_R \cup B_S^c}$ under $\P_N$ conditioned on $X_{|B_S \setminus B_R}$ with $M = N - X(B_S \setminus B_R)$. So integration of \eref{mainQ} implies that for any $X_{|B_S \setminus B_R}$-measurable Borel set $F \subset \mcl N$, integer $n \geq 1$, and $\rho_0 > 2, T$ with $T$ large compared to $n, S,R$, we have
		\begin{align} \label{e.PNF.ineq}
			\P_N(E_{n,R} \cap F) &\leq C\P_N(E_{n+1,R} \cap F) + C\P_N(E_{n-1,R} \cap F)  \\  \notag &\quad + \P_N(D_1 \cap F) + \P_N(D_2 \cap F) + \P_N(D_3 \cap F)
		\end{align}
		for $D_1 = \{X(B_{3T}  \setminus B_{T/2}) \leq \rho_0 T^d\}$, $D_2 = \{X(B_{2T} \setminus B_T) \geq \rho_0^{-1} T^d\}$ and $D_3 = \{X(B_{R-1} )\geq \rho_0^{-1}n\}$. The constant $C$ is independent of $S$, $T$, and $N$.  Note $\P$ has an intensity dominated by Lebesgue measure on $\R^d$ by \cite[Theorem 3]{T23}, and so $\P$ does not give positive measure to the boundary of $E_{k,R}$,$D_1$,$D_2$, or $D_3$ within $\mcl N$ for any $k$. It follows that if $F$ is a continuity set for $\P$, then we have by weak convergence of $\P_N$ to $\P$ (along a subsequence) that 
		\begin{align} \label{e.PF.ineq}
			\P(E_{n,R} \cap F) &\leq C(\P(E_{n+1,R} \cap F) + \P(E_{n-1,R} \cap F)  \\  \notag &\quad + \P(D_1 \cap F) + \P(D_2 \cap F) + \P(D_3 \cap F).
		\end{align}
		The continuity sets of $\P$ include sets of the form $\{X(A) \geq k\}$ for relatively open balls $A \subset B_S \setminus B_R$ and $k \geq 0$, and the set of $X_{|B_S \setminus B_R}$-measurable continuity sets form an algebra. The set of all $X_{|B_S \setminus B_R}$-measurable sets $F$ satisfying \eref{PF.ineq} are a monotone class, so by the monotone class theorem we have that \eref{PF.ineq} holds for all $F$ in the $\sigma$-algebra generated by $\{X(A) \geq k\}$ for $A \subset B_S \setminus B_R$ a relatively open ball and $k\geq 0$. Denote by $\Sigma$ this $\sigma$-algebra. Clearly if $X \mapsto X(A)$ is $\Sigma$-measurable, then $A \subset B_S \setminus B_R$.
		
		Note that the set of relatively open balls $A \subset B_S \setminus B_R$ with rational data generate the relative topology on $B_S \setminus B_R$ and therefore generate the Borel $\sigma$-algebra (since this a countable set of generators). The set of $A$ for which the map $X \mapsto X(A)$ is $\Sigma$-measurable is itself a monotone class containing all relatively open balls in $B_R \setminus B_R$, and so the map must be $\Sigma$-measurable for all Borel sets $A \subset B_S \setminus B_R$. The Borel $\sigma$-algebra $\sigma(X_{|B_S \setminus B_R})$ is the smallest with this measurability property, so $\Sigma \supset \sigma(X_{|B_S \setminus B_R})$. It follows that \eref{PF.ineq} holds for all $X_{|B_S \setminus B_R}$-measurable sets $F$.
		
		 We conclude
		\begin{equation}
			\P(E_{n,R} \ | \ X_{|B_S \setminus B_R}) \leq C \P(E_{n+1,R} \ | \ X_{|B_S \setminus B_R}) + C \P(E_{n-1,R} \ | \ X_{|B_S \setminus B_R}) + \mathrm{Err}_1 + \mathrm{Err}_2 + \mathrm{Err}_3,
		\end{equation}
		$\P$-almost surely, where
		\begin{align*}
			\mathrm{Err}_1 &= \mbb P(\{X(B_{3T} \setminus B_{T/2}) \geq \rho_0 T^d\} \ | \ X_{|B_S \setminus B_R}), \\\mathrm{Err}_2 &= \mbb P(\{X(B_{2T} \setminus B_T) < \rho_0^{-1} T^d\}  \ | \ X_{|B_S \setminus B_R}), \\ \mathrm{Err}_3&= \mbb P(\{X(B_{R} \setminus B_{R-1}) \geq (1-\rho_0^{-1}) n\}  \ | \ X_{|B_S \setminus B_R}).
		\end{align*}
		We will now take $T \to \infty$ along a subsequence. For $\rho_0$ large enough depending on $\sup \Delta W$ and $\beta^{-1}$, we have by \cite[Theorem 1.1]{T23} that
		$$
		\mbb P_N(\{X(B_{3T} \setminus B_{T/2}) \geq \rho_0 T^d\}) \leq e^{-c \beta \rho_0^2 T^{d-2}}
		$$
		for a dimensional constant $c > 0$, uniformly in $N$. The same holds for $\P$ in place of $\P_N$ by convergence of finite dimensional distributions, and so we have
		$
		\lim_{T \to \infty} \mathrm{Err}_1 = 0
		$
		in $L^1(\P)$.
		By assumption \eref{nonzerodensity}, for $\rho_0 > \delta^{-1}$ we have $\lim_{T \to \infty} \mathrm{Err}_2 = 0$ in $L^1(\P)$ along a subsequence. We have therefore proved that $\P$ almost surely
		\begin{equation}
			\P(E_{n,R} \ | \ X_{|B_S \setminus B_R}) \leq C \P(E_{n+1,R} \ | \ X_{|B_S \setminus B_R}) + C \P(E_{n-1,R} \ | \ X_{|B_S \setminus B_R}) + \mathrm{Err}_3.
		\end{equation}
		We can now take $S \to \infty$ to see almost surely
		\begin{align} \label{e.almostdone}
			\P(E_{n,R} \ | \ X_{|B_R^c}) &\leq C \P(E_{n+1,R} \ | \ X_{|B_R^c}) + C \P(E_{n-1,R} \ | \ X_{|B_R^c}) \\ \notag &\quad + \mbb P(\{X(B_{R} \setminus B_{R-1}) \geq (1-\rho_0^{-1}) n\}  \ | \ X_{|B_R^c}).
		\end{align}
		By \cite[Theorem 3]{T23}, we have that $\mbb E_{\mbb P}[X(B_r(z))] \leq Cr^d$ for all $r > 0$. It follows by Chebyshev's inequality that for any $\ep > 0$ we have a $\rho_1$ such that
		$$
		\mbb P(\{X(B_{R} \setminus B_{R-1}) \geq \rho_1 R^{d-1}\}) \leq \ep^2/2 \quad \forall \ R \geq 1.
		$$
		Since $\rho_0 > 2$, there exists a $\sigma(X_{|B_R^c})$-measurable event $G_R^{(1)}$ with $\P(G_R^{(1)}) \geq 1 - \ep/2$ such that a.s.\ for $X \in G_R^{(1)}$ and for any $n \geq 2\rho_1 R^{d-1}$ we have
		$$
		\mbb P(\{X(B_{R} \setminus B_{R-1}) \geq (1-\rho_0^{-1}) n\}  \ | \ X_{|B_R^c}) \leq \ep.
		$$
		Inserting this into \eref{almostdone} proves \eref{main} for $n \geq 2 \rho_1 R^{d-1}$. Crucially, $\rho_1$ is chosen independent of $R > 1$.
		
		Handling the $n < 2\rho_1 R^{d-1}$ case is straightforward. By \eref{nonzerodensity}, there exists a large $R$ with $\P(\{ X(B_{R}) \geq \delta R^d\}) \geq 1 - \ep^2/2$. Therefore, we can find a $\sigma(X_{|B_R^c})$-measurable event $G_R^{(2)}$ with $\P(G_R^{(2)}) \geq 1 - \ep/2$ such that a.s.\ for $X \in G_R^{(2)}$, we have
		$
		\P(\{ X(B_{R}) < \delta R^d \} \ | \ X_{|B_R^c}) \leq \ep.
		$
		If we ensure $\delta R^d > 2\rho_1 R^{d-1}$, it follows that on this event we have $\P(E_{n,R} \ | \ X_{|B_R^c}) \leq \ep$ for all $n < 2\rho_1 R^{d-1}$. Letting $G_R = G^{(1)}_R \cap G^{(2)}_R$ finishes the proof.
	\end{proof}

	\section{Proof of \tref{infvolrhon}}
	We will prove an estimate akin to \eref{infvolrhon} for $\P_N$ and then send $N \to \infty$. We will handle the conditioning and weak limits in a similar way as in the proof of \pref{main}. For a finite, nonnegative measure $\mu$ supported within $B_R^c$, we define the probability measure
	\begin{equation} 
		\mbb Q^\mu_{n,R}(dY_n) \propto \exp(-\beta \mcl H^\mu(Y_n)) \prod_{i =1}^n \1_{B_R}(y_i) dy_i
	\end{equation}
	for point configurations $Y_n = (y_1,\ldots,y_n) \subset \R^d$, $d \geq 2$. We associate to $\mbb Q^\mu_{n,R}$ the point process $Y = \sum_{i=1}^n \delta_{y_i}$, and the energy $\mcl H^\mu(Y_n)$ is as in \eref{Hmudef} with $S = \infty$ and $M = n$. For $r > 0$, we let $\nu_r$ be the uniform probability measure on the ball $B_r(0) \subset \R^d$, and we define the isotropic averaging operator
	\begin{equation}
		\Iso_r F(Y_n) = \nu_r^{\otimes n} \ast F(Y_n)
	\end{equation}
	as a convolutional operator on nice enough functions $F$ on $(\R^d)^n$. The key observation is that, because $\mu$ is nonnegative and $\g$ is superharmonic, we have
	\begin{equation}
		\Iso_r \(\sum_{i \ne j} \g(y_i - y_j) + \int_{B_R^c} \sum_{i=1}^n \g(x-y_i) \mu(dx)\) \leq \sum_{i \ne j} \g(y_i - y_j) + \int_{B_R^c} \sum_{i=1}^n \g(x-y_i) \mu(dx).
	\end{equation}
	Moreover, because $\Delta W \leq \delta^{-1}$, we have
	\begin{equation}
		\Iso_r \(\sum_{i=1}^n W(y_i)\) \leq \sum_{i=1}^n W(y_i) + C\delta^{-1} nr^2.
	\end{equation}
	Due to the $2r$ diameter support of $\nu_r$, for points $z_i \in \R^d$ and $s_i > 0$, we have
	\begin{equation}
		\Iso_r \prod_{i=1}^n \1_{B_R \cap B_{s_i}(z_i)}(y_i) = 0 
	\end{equation}
	if $|y_i - z_i| > s_i + r$ for any $i$. Moreover, by Young's inequality, we have
	\begin{equation}
		\Iso_r \prod_{i=1}^n \1_{B_R \cap B_{s_i}(z_i)}(y_i)  \leq C\| \nu_r^{\otimes n} \|_{L^\infty} \prod_{i=1}^N s_i^{d} \leq C r^{-dn} \prod_{i=1}^N s_i^d.
	\end{equation}
	We apply the above to prove the following.
	\begin{proposition} \label{p.finvolrhon}
		For any balls $A_1,\ldots,A_n \subset B_R$, we have
		\begin{equation}
			\Q^\mu_{n,R}\(\bigcap_{i=1}^n \{y_i \in A_i\}\) \leq C r^{-dn} \prod_{i=1}^n \mathrm{vol}(A_i)
		\end{equation}
		for $r = \min_{i=1,\ldots,n} \dist(A_i,\pa B_R)$, uniformly in $\mu$.
	\end{proposition}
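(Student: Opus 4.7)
The plan is to bound $\mbb Q^\mu_{n,R}\bigl(\bigcap_i \{y_i \in A_i\}\bigr) = I/\mcl Z$, where $I = \int \prod_i \1_{A_i}(y_i)\,e^{-\beta \mcl H^\mu(Y_n)}\,dY_n$ and $\mcl Z$ is the partition function of $\mbb Q^\mu_{n,R}$, by combining the two isotropic-averaging inequalities the preamble has just established: the pointwise bound $\Iso_r\bigl[\prod_i \1_{A_i}\bigr] \leq Cr^{-dn}\prod_i \mathrm{vol}(A_i)$, whose support is contained in $\prod_i(A_i + B_r(0)) \subset B_R^n$ because $\dist(A_i,\partial B_R) \geq r$, together with the energy bound $\Iso_r[\mcl H^\mu] \leq \mcl H^\mu + C\delta^{-1}nr^2$. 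The leverage comes from the fact that $\Iso_r$ is self-adjoint in $L^2$ (convolution against the symmetric measure $\nu_r^{\otimes n}$), so the two inequalities can be combined in a single duality identity.

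Concretely, I apply $\int (\Iso_r f)\,g\,dY_n = \int f\,(\Iso_r g)\,dY_n$ with $f = \prod_i \1_{A_i}$ and $g = \1_{B_R^n}\,e^{-\beta \mcl H^\mu}$. The LHS is controlled using the indicator estimate, together with the fact that the support of $\Iso_r f$ sits inside $B_R^n$ and so freely absorbs the extra $\1_{B_R^n}$; this yields an upper bound of $Cr^{-dn}\prod_i \mathrm{vol}(A_i) \cdot \mcl Z$. For the RHS, at any $Y_n \in \prod_i A_i$ the same distance condition ensures $y_i + w_i \in B_R$ for every $w_i \in B_r(0)$, so the $\1_{B_R^n}$ drops out of $\Iso_r g(Y_n)$, and Jensen's inequality together with the energy comparison give $\Iso_r g(Y_n) \geq e^{-\beta \Iso_r[\mcl H^\mu](Y_n)} \geq c\,e^{-\beta \mcl H^\mu(Y_n)}$, with $c$ depending only on $n,R,\beta,\delta$ (since $r \leq R$). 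Integrating over $Y_n \in \prod_i A_i$ bounds the RHS below by $c\,I$, and rearranging gives the desired inequality.

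I do not expect a serious obstacle; the proof is essentially an assembly of the pieces already prepared in the preamble. The only bookkeeping is the double use of $\dist(A_i,\partial B_R)\geq r$: once to place the support of $\Iso_r\bigl[\prod_i \1_{A_i}\bigr]$ inside $B_R^n$, and once to guarantee that $\Iso_r$ applied to the conditioned density at a point of $\prod_i A_i$ agrees with $\Iso_r$ applied to the unconditioned density $e^{-\beta \mcl H^\mu}$. A stylistic alternative that avoids invoking self-adjointness is to prove the pointwise density bound $e^{-\beta \mcl H^\mu(Y_n)}/\mcl Z \leq Cr^{-dn}$ for $Y_n \in \prod_i A_i$ directly, by shifting the configuration inside $\mcl Z$ by $W_n \in B_r(0)^n$ and applying the same Jensen-plus-superharmonicity argument; both routes rely on exactly the same ingredients.
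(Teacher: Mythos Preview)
Your proposal is correct and is essentially the paper's own argument: both use the energy comparison $\Iso_r \mcl H^\mu \leq \mcl H^\mu + C$, Jensen's inequality, self-adjointness of $\Iso_r$, and the $L^\infty$/support bound on $\Iso_r\bigl[\prod_i \1_{A_i}\bigr]$, with the distance condition $\dist(A_i,\partial B_R)\geq r$ used exactly as you describe. The only cosmetic difference is the order in which the duality and the two pointwise bounds are invoked.
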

\begin{proof}
	We abbreviate $\Q = \Q^\mu_{n,R}$ and let $A_i = B_{s_i}(z_i)$. For a normalizing constant $\mcl Z$, we have
	\begin{equation*}
		\Q\(\bigcap_{i=1}^n\{y_i \in A_i\}\) = \frac{1}{\mcl Z} \int_{(\R^d)^n} \exp(-\beta \mcl H^\mu(Y_n)) \prod_{i=1}^n \1_{B_R \cap B_{s_i}(z_i)}(y_i) dy_i.
	\end{equation*}
	By the discussion preceding the proposition, we have $\Iso_r \mcl H^\mu(Y_n) \leq \mcl H^\mu(Y_n) + C$. By Jensen's inequality and self-adjointness of $\Iso_r$, we find (for a normalizing constant $\mcl Z$)
	\begin{align*}
		\Q\(\bigcap_{i=1}^n\{y_i \in A_i\}\) &\leq \frac{C}{\mcl Z} \int_{(\R^d)^n} \exp(-\beta (\Iso_r \mcl H^\mu)(Y_n)) \prod_{i=1}^n \1_{B_R \cap B_{s_i}(z_i)}(y_i) dy_i \\
		&\leq \frac{C}{\mcl Z} \int_{(\R^d)^n} \exp(-\beta \mcl H^\mu(Y_n)) \Iso_r \(\prod_{i=1}^n \1_{B_R \cap B_{s_i}(z_i)}(y_i) \) dy_i \\
		&\leq \frac{C \prod_{i=1}^N s_i^d}{r^{dn} \mcl Z}\int_{(\R^d)^n} \exp(-\beta \Iso_r \mcl H^\mu(Y_n)) \prod_{i=1}^n \1_{B_R}(y_i) dy_i.
	\end{align*}
	The integral on the bottom line is equal to $\mcl Z$.
\end{proof}

\begin{proof}[Proof of \tref{infvolrhon}]
	By \pref{finvolrhon}, we have for disjoint balls $A_1,\ldots,A_n \subset B_R$ that
	\begin{equation}
		\mbb Q^\mu_{n,R}\(\bigcap_{i=1}^n\{Y(A_i) \geq 1\}\) = n! \mbb Q^\mu_{n,R}\(\bigcap_{i=1}^n\{y_i \in A_i\}\) \leq C\ep^{-dn} \prod_{i=1}^n\mathrm{vol}(A_i)
	\end{equation}
	uniformly in $\mu$ for $\ep = \min_i \dist(A_i,\pa B_R)$.
	
	Note that $\mbb Q^\mu_{n,R}$ with $\mu = X_{|B_R^c}$ and $N-n = X(B_R^c)$ is the distribution of $\P_N$ conditioned on $X_{|B_R^c}$. We find $\P_N$-a.s.\ that
	\begin{equation}
		\P_N\(\{X(B_R) = n\} \cap \bigcap_{i=1}^n\{X(A_i) \geq 1\} \ \bigg | \ X_{|B_R^c}\)  \leq C \ep^{-dn} \prod_{i=1}^n \mathrm{vol}(A_i).
	\end{equation} 
	We take the limit of the above inequality as $N \to \infty$ in the same way as in the proof of \pref{main} to find that the same inequality holds for $\P$ in place of $\P_N$. Finally, we divide both sides by $\P(\{X(B_R) = n\} \ | \ X_{|B_R^c})$ to conclude. 
\end{proof}

\subsubsection*{Acknowledgments.}\ The author would like to thank Sylvia Serfaty and Thomas Lebl\'e for helpful comments. The author was partially supported by NSF grant DMS-2000205.

\addcontentsline{toc}{section}{References}
\bibliographystyle{alpha}
\bibliography{rigidbib}{}

\vskip .5cm
\noindent
\textsc{Eric Thoma}\\
Courant Institute, New York University. \\
Email: {eric.thoma@cims.nyu.edu}.
\vspace{.2cm}

\end{document}